\newtheorem{theorem}{Theorem}[section]
\newtheorem{lemma}[theorem]{Lemma}
\newtheorem{corollary}[theorem]{Corollary}
\newtheorem{proposition}[theorem]{Proposition}
\theoremstyle{definition}
\newtheorem{definition}[theorem]{Definition}
\newtheorem{remark}[theorem]{Remark}
\newtheorem{example}[theorem]{Example}
\newcommand{\al}{\alpha}
\newcommand{\bb}{\mathbb}
\newcommand{\comment}[1]{}
\newcommand{\into}{\hookrightarrow}
\newcommand{\isoto}{\stackrel{\simeq}{\to}}
\newcommand{\mult}[1]{#1^{\!\times}}
\newcommand{\onto}{\twoheadrightarrow}
\newcommand{\op}{\operatorname}
\newcommand{\pid}[1]{\langle #1 \rangle}
\newcommand{\res}{\overline}
\newcommand{\roi}{\mathcal{O}}
\newcommand{\To}{\longrightarrow}
\newcommand{\xto}{\xrightarrow}
\renewcommand{\cal}{\mathcal}
\renewcommand{\frak}{\mathfrak}
\renewcommand{\tilde}{\widetilde}
\renewcommand{\Im}{\operatorname{Im}}
\renewcommand{\ker}{\operatorname{Ker}}
\DeclareMathOperator{\Frac}{Frac}
\renewcommand{\pid}[1]{\langle{#1}\rangle}
\begin{document}
\title{$K_2$ of localisations of local rings}

\author{Matthew Morrow}

%\classno{}
\date{}
%\extraline{}

\maketitle

\begin{abstract}
We show that $K_2$ of ``sufficiently regular'' localisations of local rings (e.g.~inverting a sequence of regular parameters) can be described by the Steinberg presentation. The proof is inductive on the number of irreducible elements being inverted, successively using a generalisation of a co-Cartesian square first exploited by Dennis and Stein.
\end{abstract}

\section{Introduction}
The culmination of work by R.~Dennis, M.~Stein, W.~van der Kallen, et al.~\cite{Dennis1973a, Dennis1973, Dennis1974, Stein1973, vanderKallen1971, vanderKallen1975, vanderKallen1977} in the early 1970s showed that $K_2$ of a local ring $A$ may be described via the Steinberg presentation (assuming that the residue field of $A$ has $>5$ elements): \[K_2(A)=\mult A\otimes_{\bb Z}\mult A/\pid{a\otimes 1-a\,|\,a,1-a\in\mult A}.\] In other words, the natural map \[K_2^M(A)\to K_2(A)\tag{\dag}\] is an isomorphism, where $K_*^M$ denotes Milnor $K$-theory. Their results apply more generally to $5$-fold stable rings. The main goal of this paper is to show that the isomorphism (\dag) continues to hold if $A$ is regular and we invert a suitable collection of elements of $A$:

\begin{theorem}
Let $A$ be a regular local ring whose residue field has $>5$ elements, and let $t_1,\dots,t_n\in A$ be irreducible elements with the following property: the quotient of $A$ by an ideal generated by any number of $t_1,\dots,t_n$ is still regular. Then \[K_2^M(A_{t_1\cdots t_n})\to K_2(A_{t_1\cdots t_n})\] is an isomorphism.
\end{theorem}

Note that the ring $A_{t_1\cdots t_n}$ is not $5$-fold stable (we check this in remark \ref{remark_stability_does_not_pass_to_localisation}) and so the classical comparison results do not apply. In fact, as far as the author is aware, this is the first result showing that the Steinberg presentation remains valid for a class of rings outside the $5$-fold stable ones (and arbitrary fields).

The idea of proof behind the main theorem is an induction in which we successively invert $t_1,\dots,t_n$ using localisation sequences for both Milnor and Quillen $K$-theory in low degrees. Owing to the appearance of $K_1$ terms in the localisation sequences, we must first consider an analogue of the main theorem for $K_1$, which is equivalent to giving conditions under which $SK_1$ of a localisation of a local ring vanishes; this is the purpose of section \ref{section_K0_and_K1}. In fact, using only this straightforward result for $K_1$, we show in remark \ref{remark_weaker_version_of_main_theorem} that, in the notation of the main theorem, $K_2^M(A_{t_1\cdots t_n})\to K_2(A_{t_1\cdots t_n})$ is surjective after tensoring by $\bb Q$.

Section \ref{section_review} is then a summary of the classical work by Dennis, Stein, van der Kallen, et al., where we review the notion of $k$-fold stability, describe symbolic elements of $K_2$, and give the basic properties the ``Dennis-Stein-Suslin-Yarosh'' $\rho$ map. A version of this map was first used by Dennis and Stein \cite{Dennis1973a} in their work proving that $K_2$ of a discrete valuation ring embeds into $K_2$ of its field of fractions; it was later extended to higher degree Milnor $K$-theory by S.~Suslin and V.~Yarosh \cite{Suslin1991}.

Section \ref{section_main_part} is the bulk of the proof of the main theorem. The first goal is to establish proposition \ref{proposition_the_cocartesian_square}, a co-Cartesian square generalising ones found in \cite{Dennis1975} and \cite{Suslin1991}. A corollary of this is a localisation sequence for Milnor $K$-theory. In the case of inverting a single prime element of $A$, the main theorem follows almost immediately by comparing this localisation sequence with the one for Quillen $K$-theory. In the general case, to proceed inductively, one must carefully check that, even though $A_{t_1\cdots t_{n-1}}$ is no longer $5$-fold stable, it continues to satisfy certain similar properties. Most important of these is perhaps (A3) on page \pageref{page_A3}, stating that the group of units of $A_{t_1\cdots t_{n-1}}$ which are congruent to $1$ mod $t_n$ is generated by its elements of the form $1+ut_n$ with $u\in\mult{A_{t_1\cdots t_{n-1}}}$.

Throughout the paper we give examples to show that certain assumptions cannot be discarded.

\begin{example}
We finish the introduction by providing a collection of examples to which our main theorem applies:
\begin{enumerate}
\item Let $A$ be a regular, local ring whose residue field has $>5$ elements, and let $t_1,\dots,t_n$ be part of a sequence of regular parameters. Then the theorem holds.
\item Let $k$ be a field with $>5$ elements and let $A$ be the localisation of $k[X,Y]$ at the origin. Let $t_1,\dots,t_n\in A$ be local equations of lines through the origin (i.e.~each $t_i$ is a non-zero linear expression in $X$ and $Y$). Then the theorem applies (and, by passing to the limit, the conclusion of the theorem would remain valid if we inverted the local equations of {\em all} lines through the origin).
\item Let $A$ be a regular, local ring whose residue field has $>5$ elements. Then the ring $A[[t]]$ of formal Taylor series is still such a ring, and $A[[t]],t$ satisfies the conditions of the theorem. Therefore $K_2(A((t)))=K_2^M(A((t)))$, where $A((t))=A[[t]][t^{-1}]$ is the ring of formal Laurent series.
\end{enumerate}
\end{example}

\subsection*{Notation and conventions}
All rings are commutative, unital, and Noetherian. We say that an element $t$ of a ring $R$ is a prime element if and only if it generates a non-zero, proper prime ideal; in a UFD this is the same as an irreducible element. When $t$ is a prime element of a domain $R$, the localisation $R_{tR}$ is a discrete valuation ring and we write $\nu_t$ for the associated $t$-adic valuation.

\subsection*{Acknowledgments}
I am very grateful to M.~Nori for many interesting discussions concerning low degree $K$-theory, especially of irregular rings, during the preparation of this paper.

\section{Preliminary results on $K_0$ and $K_1$}\label{section_K0_and_K1}
We begin with some straightforward results on $K_0$ and $K_1$. The following is classical but I could not find a reference:

\begin{lemma}\label{lemma_K0_under_localisations}
Let $R$ be a regular ring for which $K_0(R)=\bb Z$. Then $K_0(S^{-1}R)=\bb Z$ for any multiplicative system $S\subset R$.
\end{lemma}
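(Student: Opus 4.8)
The plan is to deduce the result from the agreement of $K_0$ with $G_0$ (the Grothendieck group of finitely generated modules) for regular rings, combined with a rank argument. First I would carry out two reductions. Since the rank map $K_0(R)\to\bb Z^{c}$, where $c$ is the number of connected components of $\Spec R$, is surjective, the hypothesis $K_0(R)=\bb Z$ forces $\Spec R$ to be connected. A regular Noetherian ring is normal, hence a finite product of normal domains, so connectedness makes $R$ a normal domain. We may assume $0\notin S$ (otherwise $S^{-1}R=0$ and there is nothing to prove), and then $S^{-1}R$ is a domain with fraction field $\Frac R$; being a localisation of $R$ it is again regular and Noetherian.

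The heart of the matter is the surjectivity of the base-change map $K_0(R)\to K_0(S^{-1}R)$, which I would establish via finite resolutions. Any finitely generated projective $S^{-1}R$-module $P$ has the form $S^{-1}M$ for some finitely generated $R$-module $M$: take a finite presentation $(S^{-1}R)^a\to(S^{-1}R)^b\to P\to 0$, clear denominators in the presenting matrix to obtain a map $R^a\to R^b$, and let $M=\op{coker}(R^a\to R^b)$, so that $S^{-1}M\cong P$ since the cleared denominator is a unit in $S^{-1}R$. Because $R$ is regular, $M$ admits a finite resolution $0\to P_n\to\cdots\to P_0\to M\to 0$ by finitely generated projective $R$-modules. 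Localisation is exact, so in $K_0(S^{-1}R)$ we get $[P]=[S^{-1}M]=\sum_i(-1)^i[S^{-1}P_i]$, exhibiting $[P]$ as the image of $\sum_i(-1)^i[P_i]\in K_0(R)$. (Equivalently, this is the statement that $K_0=G_0$ for the regular rings $R$ and $S^{-1}R$, together with the surjection $G_0(R)\onto G_0(S^{-1}R)$ coming from the localisation sequence in $G$-theory.) Consequently $K_0(S^{-1}R)$ is generated by $[S^{-1}R]$ and is a cyclic quotient of $\bb Z$.

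To finish I would use a rank homomorphism to see that this quotient is trivial. Since $S^{-1}R$ is a domain, sending a finitely generated projective module $Q$ to $\dim_{\Frac R}(Q\otimes_{S^{-1}R}\Frac R)$ defines a homomorphism $K_0(S^{-1}R)\to\bb Z$ carrying $[S^{-1}R]$ to $1$. Hence the generator $[S^{-1}R]$ has infinite order, and combined with the surjection of the previous step this forces $K_0(S^{-1}R)=\bb Z$.

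The main obstacle, and really the only substantive ingredient, is the surjectivity in the second step: it depends on every finitely generated $R$-module admitting a finite projective resolution, i.e.\ on $R$ having finite global dimension. This is automatic once $R$ has finite Krull dimension, which covers the intended applications to localisations of regular local rings; more generally the argument goes through verbatim whenever the regular rings in question have finite global dimension.
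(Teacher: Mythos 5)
Your proof is correct and follows essentially the same route as the paper: lift a finitely generated projective $S^{-1}R$-module to a finitely generated $R$-module, use regularity to produce a finite projective resolution, and base change along $R\to S^{-1}R$. The only divergence is cosmetic bookkeeping at the end --- the paper upgrades the projective resolution to a finite free resolution (using that projectives over $R$ are stably free) and concludes the localised module is stably free, whereas you stay at the level of classes in $K_0$ and split off $\bb Z$ via the rank over $\Frac R$; your closing caveat about finite global dimension is a fair remark on a point the paper glosses, though it is immaterial for the intended applications to localisations of regular local rings, which have finite Krull dimension.
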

\begin{proof}
Let $V$ be a finitely generated projective module over $S^{-1}R$. Pick a finitely generated $R$ module $M$ for which $S^{-1}M=V$; since $R$ is regular, $M$ admits a finite length resolution by finitely generated projective $R$-modules. Each of these projectives is stably free by assumption, and so it is easy to modify the finite projective resolution to obtain a finite free resolution. Base changing to $S^{-1}R$ provides a finite free resolution of $V$, whence it is stably free.
\end{proof}

\comment{
\begin{lemma}
Let $R$ be $1$-fold stable (see definition \ref{definition_k_fold_stable}). Then $SK_1(R)=0$, i.e. $K_1(R)\cong\mult R$.
\end{lemma}
\begin{proof}
$1$-fold stability is the same as the strongest of Bass' stable range condition, called $SR_2(R)$ in \cite[Chap.~V, \S3]{Bass}; theorem 4.1 of loc.~cit.~implies that $GL(R)/E(R)$ is generated by elements from $GL_1(R)$. Hence $\det:K_1(R)\to\mult R$ is an isomorphism.

Of course, Bass' general theorem is not required in this case:~A rather straightforward induction shows that if $a_1,\dots,a_n\in R$ satisfy $\pid{a_1,\dots,a_n}=R$, then there exists $r_2,\dots,r_n\in R$ such that $a_1+r_2a_2+\dots+r_na_n\in\mult R$ (the case $n=2$ is exactly the definition of $1$-fold stability). From this it is easy to transform by row and column operations, i.e.~modulo $E_n(R)$, a general element of $SL_n(R)$ to one of the form \[\begin{pmatrix}u&0&\cdots&0\\0&&&\\\vdots&&g&\\0&&&\end{pmatrix},\] where $u\in\mult R$ and $g\in SL_{n-1}R$. But it is well known that $\begin{pmatrix}u^{-1}&0\\0&u\\\end{pmatrix}\in E_2(R)$ (see, e.g.~\cite[Chap.~V, Prop.~1.7]{Bass} or \cite[Corol.~2.1.3]{Rosenberg}, and multiplying by this shows that our original element of $SL_n(R)$ is equivalent modulo $E_n(R)$ to an element of $SL_{n-1}(R)$. By induction we deduce $SL_n(R)=E_n(R)$.
\end{proof}
}
\begin{remark}\label{remark_preliminary_comments_on_R_t}
Suppose that $R$ is a domain and that $t\in R$ is a prime element. In section \ref{section_main_part} we will be interested in the following assumption:
\begin{description}
\item[\bf(A1)] $R_t\cap R_{tR}=R$.
\end{description}
Assumption (A1) implies that $\mult R_t\cap\mult R_{tR}=\mult R$, and so the $t$-adic valuation induces an exact sequence \[0\to \mult R\to\mult R_t\to\bb Z\to 0.\] In other words, a unit of $R_t$ may be decomposed as $ut^n$ for some unique $u\in\mult R$ and $n\in\bb Z$.

Assumption (A1) holds as soon as $R$ is normal; indeed, in that case, $R=\bigcap_{\frak p} R_{\frak p}$ (where $\frak p$ runs over the height one prime ideals of $R$) and $R_t\subseteq\bigcap_{\frak p\neq tR} R_{\frak p}$.
\end{remark}

\begin{proposition}
Suppose that $R$ is a ring and that $t_1,\dots,t_n\in R$ is a sequence of non-zero-divisor, non-unit elements such that:
\begin{enumerate}
\item $R$ is regular, $K_0(R)=\bb Z$, and $K_1(R)\cong\mult R$.
\item For $i=1,\dots, n$, the ring $R/t_iR$ is regular and $K_0(R/t_iR)=\bb Z$.
\end{enumerate}
Then $K_1(R_{t_1\cdots t_n})=\mult R_{t_1\cdots t_n}$.
\end{proposition}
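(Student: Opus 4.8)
The plan is to induct on $n$, the inductive step amounting to inverting a single prime element, which I would handle by comparing Quillen's localisation sequence with the elementary exact sequence of units from Remark~\ref{remark_preliminary_comments_on_R_t}. First note that hypothesis (i) forces $\Spec R$ to be connected, so the regular ring $R$ is a domain; likewise each $R/t_iR$ is a regular domain, whence each $t_iR$ is prime. In particular $R$ is normal, so assumption (A1) holds for $R$ and each $t_i$. For the induction set $R'=R_{t_1\cdots t_{n-1}}$: localisation preserves regularity, Lemma~\ref{lemma_K0_under_localisations} gives $K_0(R')=\bb Z$, and the inductive hypothesis gives $K_1(R')=\mult{R'}$. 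If $t_n$ is already a unit in $R'$ there is nothing to prove; otherwise $t_n$ is a non-zero-divisor, non-unit, and $R'/t_nR'=(R/t_nR)_{t_1\cdots t_{n-1}}$ is again regular with $K_0=\bb Z$ by Lemma~\ref{lemma_K0_under_localisations}. Thus it suffices to treat the case $n=1$: given a regular domain $S$ with $K_0(S)=\bb Z$ and $K_1(S)=\mult S$, and a prime $t$ with $S/tS$ regular and $K_0(S/tS)=\bb Z$, to show $K_1(S_t)=\mult{S_t}$.

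For this I would invoke the localisation sequence for the open immersion $\Spec S_t\into\Spec S$; since $S$ is regular, $K$-theory agrees with $G$-theory, and dévissage along the prime divisor $V(t)$ (reduced, as $S/tS$ is regular) identifies the relative term with $K_*(S/tS)$. This yields an exact sequence
\[K_1(S)\xrightarrow{\iota}K_1(S_t)\xrightarrow{\partial}K_0(S/tS)\to K_0(S),\]
in which the last map sends the generator $[S/tS]$ to $[S]-[S]=0$ (via the resolution $0\to S\xrightarrow{t}S\to S/tS\to 0$); hence $\partial$ is surjective onto $K_0(S/tS)\cong\bb Z$.

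I would then compare the top row above with the exact sequence $0\to\mult S\to\mult{S_t}\xrightarrow{\nu_t}\bb Z\to 0$ of Remark~\ref{remark_preliminary_comments_on_R_t} by means of the determinant. Functoriality of the determinant makes the left-hand square commute, and the identifications $K_1(S)=\mult S$ and $K_0(S/tS)=\bb Z$ turn the outer vertical maps into isomorphisms. Granting commutativity of the right-hand square, a short diagram chase shows that any $x\in K_1(S_t)$ with $\det x=0$ has $\partial x=0$, hence lies in $\Im\iota$, hence equals $\iota(u)$ for some $u\in\mult S$ with image $\det x=0$ in $\mult{S_t}$; since $\mult S\to\mult{S_t}$ is injective this gives $u=1$ and $x=0$. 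As $\det$ is split surjective via the canonical section $\mult{S_t}\to K_1(S_t)$, it is then an isomorphism, i.e.\ $SK_1(S_t)=0$.

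The crux, and the step I expect to be the main obstacle, is the commutativity of that right-hand square, i.e.\ the compatibility $\partial=\nu_t\circ\det$ of the $K$-theoretic boundary with the $t$-adic valuation. Since every unit of $S_t$ has the form $ut^m$ with $u\in\mult S$ and $m\in\bb Z$ (Remark~\ref{remark_preliminary_comments_on_R_t}), and units coming from $\mult S$ lie in $\ker\partial=\Im\iota$, this reduces to the single computation $\partial([t])=[S/tS]$ in $K_0(S/tS)$. This is the standard fact that the connecting map carries the class of the automorphism ``multiplication by $t$'' of $S_t$ to $[\op{coker}(t)]-[\ker(t)]=[S/tS]$, which I would extract from the explicit low-degree description of the localisation boundary or verify directly from the construction of the connecting homomorphism.
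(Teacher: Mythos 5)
Your proposal is correct and follows essentially the same route as the paper: induction on $n$, with lemma \ref{lemma_K0_under_localisations} supplying regularity and $K_0=\bb Z$ for the localised rings (and the same case split on whether $t_n$ becomes a unit), then in the case $n=1$ a comparison of the tail of the Quillen localisation sequence with the valuation sequence $0\to\mult S\to\mult{S_t}\to\bb Z\to 0$ of remark \ref{remark_preliminary_comments_on_R_t} via the determinant and rank maps. The only cosmetic differences are that you get surjectivity of $\partial$ from the vanishing of $[S/tS]$ in $K_0(S)$ (via $0\to S\xto{t}S\to S/tS\to 0$) where the paper instead uses injectivity of $K_0(S)\to K_0(S_t)$, and that you make explicit the compatibility $\partial([t])=[S/tS]$, which the paper leaves implicit when it draws the commutative diagram.
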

\begin{proof}
Suppose first that $n=1$; write $t=t_1$. Since $R/tR$ is a regular ring whose spectrum is connected (since $K_0(R/t_iR)=\bb Z$), it is a domain. Therefore $t$ is a prime element of $R$ (which is a domain by the same argument); by the previous remark, there is an exact sequence \[0\to \mult R\to\mult R_t\to\bb Z\to 0.\] Next consider the end of the long exact localisation sequence for $K$-theory: \[K_1(R)\to K_1(R_t)\to K_0(R/tR)\to K_0(R)\to K_0(R_t).\] Since $K_0(R)=\bb Z$, the rightmost arrow is injective (even an isomorphism by lemma \ref{lemma_K0_under_localisations}), and so $K_1(R_t)\to K_0(R/tR)$ is surjective. Comparing our two sequences via the determinant and rank maps yields
\[\xymatrix{
0\ar[r]&\mult R\ar[r]&\mult R_t\ar[r]&\bb Z\ar[r]& 0\\
&K_1(R)\ar[r]\ar[u]&K_1(R_t)\ar[r]\ar[u]&K_0(R/tR)\ar[r]\ar[u]& 0
}\]
By assumption the left and right vertical arrows are isomorphisms, whence the central arrow is as well.

Now consider the general case, by induction. Put $R'=R_{t_1\cdots t_{n-1}}$ and $t=t_n$; so the inductive hypothesis implies $K_1(R')\cong\mult{R'}$.

{\bf Case:} $t_i\in tR$ for some $i=1,\dots,n-1$. Then $t$ is already a unit in $R'$ and so $R'=R_{t_1\cdots t_n}$; there is nothing more to show in this case.

{\bf Case:} $t_i\not\in tR$ for any $i=1,\dots,n-1$. Since $R/tR$ is a regular ring whose spectrum is connected, it is a domain; therefore $t_1\cdots t_{n-1}$ is non-zero in $R/tR$. So $R'$ and $R'/tR'$ are both localisations of regular rings with $K_0=\bb Z$ by non-zero elements; therefore they are both themselves regular rings with $K_0=\bb Z$, by lemma \ref{lemma_K0_under_localisations}. This reduces the question to the case $n=1$, which we have already treated.
\end{proof}

The following corollary is the $K_1$ analogue of our main theorem:

\begin{corollary}\label{corollary_K_1_behaves_under_localisation}
Let $A$ be a regular local ring and let $t_1,\dots,t_n\in A$ be non-zero elements such that $A/t_iA$ is regular for each $i$. Then $K_1(A_{t_1\cdots t_n})=\mult{A_{t_1\cdots t_n}}$.
\end{corollary}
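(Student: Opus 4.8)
The plan is to deduce the corollary directly from the preceding proposition, taking $R=A$; the only work is to verify hypotheses (i) and (ii) of that proposition, together with a harmless preliminary reduction. First I would dispose of any $t_i$ which happens to be a unit of $A$: inverting a unit changes nothing, so $A_{t_1\cdots t_n}$ coincides with the localisation of $A$ at the product of those $t_i$ which are non-units, and we may therefore assume from the outset that every $t_i$ is a non-unit. (If all the $t_i$ are units then $A_{t_1\cdots t_n}=A$ and the assertion reduces to the classical fact $K_1(A)\cong\mult A$ recorded below.)

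Next I would check hypothesis (i). A regular local ring is an integral domain, so each non-zero $t_i$ is automatically a non-zero-divisor; combined with the previous reduction, $t_1,\dots,t_n$ is a sequence of non-zero-divisor, non-unit elements, as required. Since $A$ is local, every finitely generated projective $A$-module is free, whence $K_0(A)=\bb Z$; and $SK_1$ of a local ring vanishes, so that $K_1(A)\cong\mult A$ via the determinant. Both of these facts are classical.

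Finally I would verify hypothesis (ii). For each $i$ the ring $A/t_iA$ is regular by assumption, and it is local because any quotient of a local ring is local; consequently $K_0(A/t_iA)=\bb Z$ for exactly the same reason as for $A$ itself. With both hypotheses established, the proposition applies and yields $K_1(A_{t_1\cdots t_n})=\mult{A_{t_1\cdots t_n}}$. I do not anticipate a genuine obstacle in this argument: it is essentially a routine verification, and the only points requiring any care are the bookkeeping around units and the appeal to the standard computations of $K_0$ and $K_1$ for local rings.
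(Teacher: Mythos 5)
Your proposal is correct and follows essentially the same route as the paper, which likewise deduces the corollary from the preceding proposition via the classical facts that $K_0=\bb Z$ and $SK_1=0$ for local rings. Your preliminary reduction discarding unit $t_i$'s is a sensible extra bit of care (the proposition's hypotheses require non-units while the corollary only assumes non-zero elements), but it does not change the substance of the argument.
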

\begin{proof}
This follows from the proposition since, for any local ring, $K_0=\bb Z$ and $SK_1=0$.
\end{proof}

\begin{remark}
To show the necessity of the regularity hypotheses, let $A$ be a regular local ring and let $t\in A$. Arguing as in the proof of the previous proposition yields a commutative diagram with exact rows
\[\xymatrix{
0\ar[r]&\mult A\ar[r]&\mult A_t\ar[r]&\bb Z\ar[r]& 0\\
&K_1(A)\ar[r]\ar[u]^\cong&K_1(A_t)\ar[r]\ar[u]&G_0(A/tA)\ar[r]\ar[u]& 0
}\]
If $A/tA$ is not regular then there is a finitely generated $A/tA$ module which has infinite projective dimension (e.g.~the residue field); then $G_0(A/tA)\to\bb Z$ is not an isomorphism, and so $K_1(A_t)\to\mult{A_t}$ is also not an isomorphism.
\end{remark}

\begin{remark}\label{remark_weaker_version_of_main_theorem}
From the corollary it is easy to deduce a statement weaker than the main theorem: If $A,t_1,\dots,t_n$ satisfy the conditions of the main theorem, then $K_2(A_{t_1\cdots t_n})_{\bb Q}$ is generated by Steinberg symbols. In other words, $K_2(A_{t_1\cdots t_n})_{\bb Q}=K_2^{(2)}(A_{t_1\cdots t_n})_{\bb Q}$.

If $n=0$ then this follows from theorem \ref{theorem_Milor_equals_Quillen_for_5_fold_stable} below. Then we proceed by induction; put $R=A_{t_1\cdots t_{n-1}}$ and $t=t_n$. If $t\in\mult R$ there is nothing more to show, so suppose not. Consider the localisation sequence for $R\to R_t$, as well as its restriction to part of the Adams decomposition:
\[\xymatrix{
K_2(R)_{\bb Q}\ar[r]& K_2(R_t)_{\bb Q}\ar[r]&K_1(R/tR)_{\bb Q}&\\
K_2^{(2)}(R)_{\bb Q}\ar[r]\ar[u]&K_2^{(2)}(R_t)_{\bb Q}\ar[r]\ar[u]&K_1^{(1)}(R/tR)_{\bb Q}\ar[u]\ar[r] & K_1^{(2)}(R)_{\bb Q}
}\]
The previous corollary may be applied to $A/tA$, with elements $t_1,\dots,t_{n-1}$ mod $tA$, to deduce that $K_1(R/tR)=\mult{(R/tR)}$; i.e.~the right vertical map is an isomorphism. Secondly, the previous corollary also implies that $K_1(R)=\mult R$ and so $K_1^{(2)}(R)_{\bb Q}=0$; therefore the central map on the bottom of the diagram is surjective. By the inductive hypothesis, the left vertical map is an isomorphism.

From a diagram chase we now see that the central vertical map is an isomorphism (one only needs to check it is surjective) completing the proof.
\end{remark}

\section{Symbolic descriptions of $K_2$}\label{section_review}
Here we review basic properties of $K_2$ and its presentations by Steinberg and Dennis-Stein symbols, as well as discussing the Dennis-Stein-Suslin-Yarosh $\rho$ map.

\subsection{Rings with lots of units}
There are various notions of when a ring has a lot of units; we will only need the following, which is a classical condition under which ``general-position'' type arguments work well:

\begin{definition}\label{definition_k_fold_stable}
Let $k\ge1$ be an integer. A ring $R$ is said to be {\em $k$-fold stable} if and only if whenever $a_1,b_1,\dots,a_k,b_k\in R$ are given such that $\pid{a_1,b_1}=\dots=\pid{a_k,b_k}=R$, then there exists $r\in R$ such that $a_1+rb_1,\dots,a_k+rb_k$ are units.

We will say that $R$ is {\em weakly $k$-fold stable} if and only if whenever $b_1,\dots,b_{k-1}\in R$ are given, then there exists $u\in\mult R$ such that $1+ub_1,\dots,1+ub_{k-1}$ are units. Note that weak $k$-fold stability follows from $k$-fold stability (using the pairs $\pid{1,b_1},\dots,\pid{1,b_{k-1}},\pid{0,1}$).
\end{definition}

\begin{remark}\label{remark_1_fold_stable}
In other words, $R$ is $1$-fold stable if and only if $\mult R\to(R/aR)^\times$ is surjective for all $a\in R$.
\end{remark}

\begin{remark}
A semi-local ring is $k$-fold stable if and only if all of its residue fields have $>k$ elements.
\end{remark}

The notion of weak $k$-fold stability does not appear anywhere in the literature; we introduce it only to be able to clearly state the following and lemma \ref{lemma_skew_symmetry_from_other_relations}:

\begin{lemma}\label{lemma_weak_stability}
If $R$ is weakly $k$-fold stable then so is $S^{-1}R$ for any multiplicative system $S\subset R$.
\end{lemma}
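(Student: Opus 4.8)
The plan is to reduce the statement directly to the definition of weak $k$-fold stability for $R$ by clearing denominators. Suppose we are given elements $\beta_1,\dots,\beta_{k-1}\in S^{-1}R$; I must produce a unit $v\in\mult{(S^{-1}R)}$ making each $1+v\beta_i$ a unit. First I would write the $\beta_i$ over a common denominator, say $\beta_i=b_i/s$ with $b_i\in R$ and a single $s\in S$ (obtained in the usual way by multiplying together the individual denominators). This replaces the given family by the data $b_1,\dots,b_{k-1}\in R$, to which the hypothesis on $R$ can be applied.

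Applying weak $k$-fold stability of $R$ to $b_1,\dots,b_{k-1}$ yields a unit $u\in\mult R$ such that $1+ub_1,\dots,1+ub_{k-1}$ are all units of $R$. The key observation is then that the correct unit to use in $S^{-1}R$ is $v=us$. Indeed $u$ is a unit of $R$, hence its image is a unit of $S^{-1}R$, and $s\in S$ becomes a unit of $S^{-1}R$ by construction of the localisation; so $v=us\in\mult{(S^{-1}R)}$. The point of choosing this $v$ is that the denominator cancels: $1+v\beta_i=1+us\cdot(b_i/s)=1+ub_i$, which is the image in $S^{-1}R$ of the unit $1+ub_i\in\mult R$ and is therefore a unit. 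This exhibits the required $v$ and completes the argument.

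There is essentially no serious obstacle here; the proof is short once one sees the right move. The only thing requiring any care is the bookkeeping: one must pass to a genuinely common denominator $s\in S$ so that a single factor of $s$ clears all the $\beta_i$ simultaneously, and one must remember that the units of $S^{-1}R$ at one's disposal include not merely the images of $\mult R$ but also the images of elements of $S$ — it is precisely this extra freedom (the factor $s$ in $v=us$) that lets the localised statement follow from the one over $R$.
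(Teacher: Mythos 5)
Your proof is correct and follows exactly the same route as the paper's: clear denominators with a single $s\in S$, apply weak $k$-fold stability of $R$ to the numerators $b_i$, and take $v=us$ so that the factor of $s$ cancels in $1+v\beta_i$. The paper's own proof is a compressed version of precisely this argument, so nothing further is needed.
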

\begin{proof}
Let $b_1,\dots,b_{k-1}\in R$ be given, and pick $s\in S$ such that each $b_i$ may be written as $b_i=b_i's^{-1}$ for some $b_i'\in R$. By the weak $k$-fold stability of $R$, there exists $u\in\mult R$ such that $1+ub_1',\dots,1+ub_{k-1}'\in\mult R$. Replace $u$ by $us$ to complete the proof.
\end{proof}

Our main result would follow from theorem \ref{theorem_Milor_equals_Quillen_for_5_fold_stable} if it were the case that localisations of $k$-fold stable rings remained $k$-fold stable. The following example is provided to show that this is not the case; although we work with a specific example, the proof works in general:

\begin{remark}\label{remark_stability_does_not_pass_to_localisation}
Let $\roi$ be a discrete valuation ring with residue field $K$, and put $R=\roi[[t]]$. Then $R$ is $k$-fold stable for every $k<|K|$, but we will show that $R_t=\roi((t))$ is not even $1$-fold stable.

Let $\pi\in\roi$ be a uniformiser. Since $\pid{\pi,1+\pi^2t^{-1}}=\roi((t))$, it is sufficient to show that there exists no $c\in \roi((t))$ satisfying $\pi+c(1+\pi^2t^{-1})\in\mult{\roi((t))}$. For a contradiction, suppose such a $c$ were to exist, and write $c=t^nc_0$ with $n\in\bb Z$, and $c_0\in \roi[[t]]$ not divisible by $t$. So, \[v:=\pi+t^nc_0(1+\pi^2t^{-1})\in\mult{\roi((t))}.\] Write $\nu_t$ for the $t$-adic valuation on $\Frac(\roi[[t]])$; note that $vt^{-\nu_t(v)}\in\mult{\roi[[t]]}$ by remark \ref{remark_preliminary_comments_on_R_t}.

{\bf Case: $n\le0$}. Then $\nu_t(v)=n-1$, so $vt^{1-n}\in\mult{\roi[[t]]}$. But \[vt^{1-n}=\pi t^{1-n}+c_0(t+\pi^2),\] which belongs to the maximal ideal of $\roi[[t]]$ so is certainly not a unit.

{\bf Case: $n\ge1$.} Then $\nu_t(v)=0$, so $v\in\mult A$, which is again absurd.

This completes the proof that $\roi$ is not even $1$-fold stable. Yet, if $|K|>5$, then our main theorem will imply that $K_2(\roi((t)))$ is described by the Steinberg presentation.
\end{remark}

\subsection{Steinberg symbols}
Given $a,b\in\mult R$, the corresponding Steinberg symbol of $K_2(R)$ will be written $\{a,b\}$ as usual. These symbols \cite[\S8]{Milnor1970} satisfy the following relations in $K_2(R)$:
\begin{enumerate}
\item[(S1)] Bilinearity: $\{a,bc\}=\{a,b\}+\{a,c\}$ and $\{ac,b\}=\{a,b\}+\{c,b\}$ for $a,b,c\in\mult R$.
\item[(S2)] Skew-symmetry: $\{a,-a\}=0$ for $a\in\mult R$.
\item[(S3)] Steinberg relation: $\{a,1-a\}=0$ for $a\in\mult R$ such that $1-a\in\mult R$.
\end{enumerate}
Let $K_2^M(R)$ be the abelian group generated by symbols $\{a,b\}$, for $a,b\in\mult R$, subject to relations (S1) and (S3) (we will say a word about (S2) in a moment); this is the second Milnor $K$-group. Thus there is a natural homomorphism of abelian groups $K_2^M(R)\to K_2(R),\;\{a,b\}\mapsto\{a,b\}$, whose image is the subgroup of $K_2(R)$ generated by the Steinberg symbols.

\begin{lemma}\label{lemma_skew_symmetry_from_other_relations}
Suppose $R$ is weakly $5$-fold stable (e.g.~a localisation of a local ring with $>5$ elements in the residue field). Then relation (S2) follows from (S1) and (S3).
\end{lemma}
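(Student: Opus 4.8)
The plan is to dispose of a ``generic'' case first and then use weak $5$-fold stability to reduce the general case to it. For the \emph{easy case} I would check that $\{z,-z\}=0$ whenever both $z$ and $1-z$ lie in $\mult R$. Here $z^{-1}$ and $1-z^{-1}=-z^{-1}(1-z)$ are also units, and the identity $-z=(1-z)/(1-z^{-1})$ together with (S1) gives $\{z,-z\}=\{z,1-z\}-\{z,1-z^{-1}\}$. The first term vanishes by (S3), while $\{z,1-z^{-1}\}=-\{z^{-1},1-z^{-1}\}=0$ by (S1) (using $\{w,c\}+\{w^{-1},c\}=\{1,c\}=0$) and (S3) applied to $z^{-1}$. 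This is exactly Milnor's field computation, and the only input it needs is that $1-z$ be invertible.

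For a general unit $a$ the difficulty is of course that $1-a$ need not be a unit, so the ratio identity is unavailable. To get around this I would introduce the ``skew-defect'' $X(a,s):=\{a,s\}+\{s,a\}$, which by (S1) is additive in the second argument: $X(a,s_1s_2)=X(a,s_1)+X(a,s_2)$. The key computation is that, for every $s\in\mult R$ with $1-s\in\mult R$ and $1-as\in\mult R$, one has $X(a,s)=-\{a,-a\}$. Indeed, expanding by bilinearity gives $\{as,-as\}=\{a,-a\}+\{a,s\}+\{s,a\}+\{s,-s\}$; since $1-s$ and $1-as$ are units, the easy case kills $\{s,-s\}$ and forces $\{as,-as\}=0$, leaving $\{a,-a\}+X(a,s)=0$.

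The punchline is that $X(a,s)=-\{a,-a\}$ is \emph{independent of the admissible choice of $s$}, which I would combine with the additivity above. So I would pick $s_1\in\mult R$ with $1-s_1,\,1-as_1\in\mult R$ (only two conditions, so weak $3$-fold stability suffices), and then pick $s_2\in\mult R$ so that all four of $1-s_2$, $1-as_2$, $1-s_1s_2$ and $1-as_1s_2$ are units. Then $s_1$, $s_2$ and the product $s_1s_2$ are \emph{all} admissible, so applying the key identity three times and using additivity yields $-\{a,-a\}=X(a,s_1s_2)=X(a,s_1)+X(a,s_2)=-2\{a,-a\}$, whence $\{a,-a\}=0$, which is (S2).

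The one genuine obstacle is producing $s_2$: it must satisfy four simultaneous invertibility conditions, all of the shape $1+s_2 b_i\in\mult R$ (with $b_1=-1$, $b_2=-a$, $b_3=-s_1$, $b_4=-as_1$). This is exactly what weak $5$-fold stability delivers, and it accounts for the numerology of the hypothesis: the extra requirement that the \emph{product} $s_1s_2$ be admissible alongside $s_1$ and $s_2$ is precisely what pushes the number of conditions on $s_2$ up to four.
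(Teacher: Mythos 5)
Your proposal is correct and is essentially the paper's own argument: the same generic case ($\{z,-z\}=0$ when $1-z\in\mult R$, via $-z=(1-z)/(1-z^{-1})$), the same key identity $\{a,s\}+\{s,a\}=-\{a,-a\}$ for each admissible $s$, and the same choices of $s_1$ by weak $3$-fold stability and $s_2$ by weak $5$-fold stability applied to $-1,-a,-s_1,-as_1$, concluding from $s_1$, $s_2$, $s_1s_2$ all being admissible. Your ``skew-defect'' $X(a,s)$ is merely convenient notation for the quantity the paper manipulates directly, so there is nothing to change.
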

\begin{proof}
Suppose $a\in\mult R$; we must show that $\{a,-a\}=0$ in $K_2^M(R)$. The proof is well-known (e.g.~\cite{Kerz2009a}), but it is important to take care to use only the weak version of $k$-fold stability:

Firstly, if actually $1-a\in\mult R$, then use the identity $-a(1-a^{-1})=1-a$ to deduce that $\{a,-a\}=\{a,1-a\}-\{a,1-a^{-1}\}=0$.

Secondly suppose that $s\in\mult R$ happens to satisfy $1-s,1-sa\in\mult R$; then  the first part of the proof tells us that $\{as,-as\}=\{s,-s\}=0$. Therefore,
\begin{align*}
0&=\{as,-as\}\\
&=\{a,-a\}+\{a,-s\}+\{s,-a\}+\{s,-s\}+\{a,-1\}+\{s,-1\}
\end{align*}
i.e.~$\{a,-a\}=-\{a,s\}-\{s,a\}$.

Thirdly, apply weak $3$ fold stability to $-1,-a$ to see that there exists an element $s_1\in \mult R$ with the above properties. Next apply weak $5$-fold stability to $-1,-a,-s_1,-s_1a$ to find $s_2\in\mult R$ which not only satisfies the above properties, but such that $s=s_1s_2$ {\em also} satisfies the above properties. So, \[\{a,-a\}=-\{a,s_1\}-\{s_1,a\},\] \[\{a,-a\}=-\{a,s_2\}-\{s_2,a\},\] and \[\{a,-a\}=-\{a,s_1s_2\}-\{s_1s_2,a\}.\] It follows that $\{a,-a\}=0$.
\end{proof}

The main classical theorem concerning the presentation of $K_2$ by Steinberg symbols is the following, due to W.~van der Kallen, H.~Maazen, and J.~Stienstra \cite{vanderKallen1975} \cite[\S8]{vanderKallen1977}:

\begin{theorem}\label{theorem_Milor_equals_Quillen_for_5_fold_stable}
If $R$ is $5$-fold stable, then $K_2(R)$ is generated by the Steinberg symbols subject to relations (S1) and (S3); in other words, $K_2^M(R)\to K_2(R)$ is an isomorphism.
\end{theorem}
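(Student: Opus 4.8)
The plan is to work with the standard definition of $K_2(R)$ as the kernel of the canonical surjection $\op{St}(R)\to E(R)$ from the Steinberg group onto the group of elementary matrices, and to compare the Steinberg symbols $\{a,b\}$ (for $a,b\in\mult R$) with the larger supply of \emph{Dennis--Stein symbols} $\langle a,b\rangle$, defined whenever $1-ab\in\mult R$. The assertion that $K_2^M(R)\to K_2(R)$ is an isomorphism splits into a generation statement (surjectivity) and a relations statement (injectivity), and the abundance of units furnished by $5$-fold stability will be exploited throughout to put symbols into ``general position''.

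For surjectivity I would invoke the Maazen--Stienstra presentation of $K_2(R)$ by Dennis--Stein symbols, valid under a suitable stability hypothesis and in particular for $5$-fold stable rings: every element of $K_2(R)$ is a finite sum of symbols $\langle a,b\rangle$ with $1-ab\in\mult R$. It then suffices to express each such symbol in terms of Steinberg symbols of units. Using the Dennis--Stein relations together with $5$-fold stability, I would reduce to the case in which one of the two entries, say $a$, is a unit; this is the general-position step in which the supply of units is first consumed. For $a\in\mult R$ the symbol $\langle a,b\rangle$ is identified, up to sign, with the Steinberg symbol $\{a,1-ab\}$, and therefore lies in the image of $K_2^M(R)$.

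The harder half is injectivity, i.e.~showing that every relation holding among Steinberg symbols in $K_2(R)$ is already a formal consequence of (S1) and (S3), together with (S2), which lemma~\ref{lemma_skew_symmetry_from_other_relations} shows is derivable under weak $5$-fold stability. Here I would again pass through the Dennis--Stein presentation: $K_2(R)$ is the quotient of the free abelian group on the symbols $\langle a,b\rangle$ by the Dennis--Stein relations, so it is enough to check that, after restricting to symbols with a unit entry and rewriting them as Steinberg symbols, each defining Dennis--Stein relation becomes a consequence of (S1) and (S3). Each individual verification is a finite manipulation, but it repeatedly calls for an auxiliary unit $s$ subject to several simultaneous invertibility conditions of the form $1-s,\,1-sa,\dots\in\mult R$, whose existence is exactly what $5$-fold stability guarantees.

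The main obstacle is precisely this bookkeeping of stability in the relations step. The delicate point is that each general-position move expends part of the available ``stability budget'', and one must confirm that $5$-fold stability suffices to carry out \emph{all} the required interpolations at once — the same phenomenon already visible in lemma~\ref{lemma_skew_symmetry_from_other_relations}, where weak $5$-fold stability was needed to produce a single unit $s=s_1s_2$ satisfying four simultaneous unit conditions. Confirming that no step demands more than $5$-fold stability, and that the Dennis--Stein relations collapse exactly onto (S1)--(S3) and impose nothing further, is the crux of the argument.
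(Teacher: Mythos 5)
The first thing to note is that the paper contains no proof of this theorem: it is quoted verbatim as a classical result of van der Kallen, Maazen, and Stienstra, with the citation serving in place of an argument. Your outline is, in broad strokes, the strategy of that cited literature — compare the Dennis--Stein presentation (valid for local or $3$-fold stable rings, as stated in the paper) with the Steinberg presentation, using stability for general-position moves. The surjectivity half of your plan is sound and in fact cheap: writing $b\oplus c:=b+c+abc=b+c(1+ab)$, relation (D2) gives $\pid{a,b}=\pid{a,b\oplus c}-\pid{a,c}$, and since $1+ab\in\mult R$ one has $\pid{b,1+ab}=R$, so already $2$-fold stability produces $c\in\mult R$ with $b\oplus c\in\mult R$; both resulting symbols have a unit second entry and equal Steinberg symbols via $\pid{a,b}=\{1+ab,b\}$. (Minor point: you use the convention $1-ab\in\mult R$, whereas the paper's symbols require $1+ab\in\mult R$.)

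The genuine gap is in the injectivity half. Your proposed reduction — ``restrict to symbols with a unit entry and check that each defining Dennis--Stein relation becomes a consequence of (S1) and (S3)'' — is not logically valid: the unit-entry symbols together with the relations among them do not form a presentation of $K_2(R)$, and the defining relations (D1)--(D3) involve symbols in which neither entry need be a unit, so there is nothing to ``restrict'' to. What is actually required is a homomorphism from the Dennis--Stein-presented group $D(R)$ to $K_2^M(R)$ splitting the natural map, hence defined on \emph{every} generator $\pid{a,b}$; for generators without a unit entry the definition must pass through the decomposition above, which depends on an auxiliary choice of $c$, and one must then prove (i) independence of that choice and (ii) that the images of (D1)--(D3) vanish in $K_2^M(R)$, with every interpolation staying within the $5$-fold stability budget. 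These verifications — the analogues of the ``intricate $(s,t)$-identities'' the paper alludes to in its discussion of the Dennis--Stein-Suslin-Yarosh map, and the bulk of van der Kallen's \S8 — are precisely the content of the theorem, and your proposal explicitly defers them (``the crux of the argument''). As written, then, you have a correct roadmap reproducing the architecture of the classical proof, but not a proof: the step on which the statement actually rests is named rather than carried out.
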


\begin{remark}
We will never need the proper definition of the Steinberg symbol, but include it here for completeness. Recall that $K_2(R)$ may be defined as \[K_2(R)=\ker(St(R)\To E(R)),\] where $St(R)$ is the Steinberg group and $E(R)$ is the group of infinite elementary matrices over $R$. The typical generators of $St(R)$ are \[x_{ij}(a)\qquad(a\in R,\,i,j\ge0,\,i\neq j).\] Put $w_{ij}(a)=x_{ij}(a)x_{ji}(-a^{-1})x_{ij}(a)$ and $h_{ij}(a)=w_{ij}(a)w_{ij}(-1)$.

In this notation, the Steinberg symbol is given by \[\{a,b\}:=[h_{12}(a),h_{13}(b)]\in K_2(R)\] whenever $a,b\in\mult R$.

We are following the modern convention of writing $K_2(R)$ additively, especially when performing symbolic manipulations in it, even though it is a subgroup of the non-abelian group $St(R)$ where we must use multiplicative notation; this should not lead to confusion since quite soon all manipulations will be via symbols.
\end{remark}

\subsection{Dennis-Stein symbols} Given $a,b\in R$ such that $1+ab\in\mult R$, Dennis and Stein \cite{Dennis1973a} defined an element of $K_2(R)$ denoted $\pid{a,b}$. These symbols satisfy the following:
\begin{enumerate}
\item[(D1)] $\pid{a,b}=-\pid{-b,-a}$ for $a,b\in R$ such that $1+ab\in\mult R$.
\item[(D2)] $\pid{a,b}+\pid{a,c}=\pid{a,b+c+abc}$ for $a,b,c\in R$ such that $1+ab,1+ac\in\mult R$.
\item[(D3)] $\pid{a,bc}=\pid{ab,c}+\pid{ac,b}$ for $a,b,c\in R$ such that $1+abc\in\mult R.$
\end{enumerate}

Moreover, any Steinberg symbol $\{a,b\}$, where $a,b\in\mult R$, is equal to a Dennis-Stein symbol in $K_2(R)$: \[\{a,b\}=\pid{(a-1)b^{-1},b}.\] It follows that the Dennis-Stein symbol $\pid{a,b}$ can be expressed as a Steinberg symbol whenever $a$ or $b$ is a unit: \[\pid{a,b}=\begin{cases}\{-a,1+ab\}&\mbox{if }a\in\mult R,\\\{1+ab,b\}&\mbox{if }b\in\mult R.\end{cases}\]

The main classical theorem concerning the presentation of $K_2$ by Dennis-Stein symbols is the following, again due to W.~van der Kallen, H.~Maazen, and J.~Stienstra \cite{vanderKallen1975, vanderKallen1977}: 

\begin{theorem}
If $R$ is local or $3$-fold stable, then $K_2(R)$ is the abelian group generated by the Dennis-Stein symbols subject to relations (D1)--(D3).
\end{theorem}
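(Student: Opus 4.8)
The plan is to construct the obvious homomorphism out of the Dennis-Stein presentation into $K_2(R)$ and then show it is an isomorphism. Write $D(R)$ for the abelian group presented by generators $\pid{a,b}$, one for each pair $a,b\in R$ with $1+ab\in\mult R$, subject only to the relations (D1)--(D3). Since Dennis and Stein verified that the actual symbols $\pid{a,b}\in K_2(R)$ satisfy (D1)--(D3), there is a canonical homomorphism $\psi\colon D(R)\to K_2(R)$, and the theorem asserts that $\psi$ is an isomorphism. I would prove surjectivity and injectivity separately. The hypotheses ``local'' and ``$3$-fold stable'' must be handled by genuinely different mechanisms: a local ring with residue field $\bb F_2$ is not even weakly $2$-fold stable, so for it no general-position argument is available, and one instead exploits that $\frak m$ lies in the Jacobson radical; conversely, $3$-fold stability supplies units directly. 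I would therefore run two arguments in parallel, using the radical in the first situation and stability in the second.

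Surjectivity says $K_2(R)$ is generated by Dennis-Stein symbols. For the $3$-fold stable case I would represent a class at the $St_3$ level and use $3$-fold stability to move the entries into general position, rewriting the class as a product of symbols $\pid{a,b}$; the ternary relation (D3) is precisely adapted to this. For the local case I would factor through the residue field $k=R/\frak m$: the relative sequence $K_2(R,\frak m)\to K_2(R)\to K_2(k)$ is exact, $K_2(k)$ is generated by Steinberg symbols by Matsumoto's theorem and these lift to $K_2(R)$ because $\mult R\onto\mult k$, and the relative group $K_2(R,\frak m)$ is generated by symbols $\pid{a,b}$ with $a\in\frak m$ --- which are automatically defined since $1+ab\in\mult R$ whenever $a$ lies in the radical. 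As every Steinberg symbol equals a Dennis-Stein symbol via $\{a,b\}=\pid{(a-1)b^{-1},b}$, both cases conclude that $K_2(R)$ is generated by Dennis-Stein symbols.

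Injectivity carries essentially all the content: one must show that (D1)--(D3) already entail every relation holding among the symbols in $K_2(R)$. The cleanest framework is to realise $D(R)$ as the kernel of a universal central extension of $E(R)$. Concretely, I would build a central extension $1\to D(R)\to G\to E(R)\to 1$ in which the elementary generators $x_{ij}(a)$ act on symbols by formulas dictated by (D2)--(D3), and then verify that $G$ is perfect with $H_2(G)=0$, identifying it with $St(R)$ and hence $D(R)$ with $K_2(R)$; equivalently, one defines a map $St(R)\to G$ and checks that each defining relation of $St(R)$ maps to a consequence of (D1)--(D3). Every such check reduces to splitting a symbol by (D2) or transporting a unit by (D3), and it is in guaranteeing that the intermediate quantities $1+ab$ remain units that the hypotheses are spent: $3$-fold stability produces the required units by general position, while in the local case the relevant $a$ lies in $\frak m$, so that $1+ab$ is a unit for free. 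The main obstacle is the well-definedness of these formulas --- showing that the choices made in the general-position steps (respectively, the choices of relative representatives) do not change the resulting class in $D(R)$, so that the cocycle defining $G$ is consistent. It is this coherence, rather than any individual relation, that forces a stability or radical hypothesis and is the reason the statement fails for arbitrary rings.
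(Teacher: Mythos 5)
Note first that the paper does not prove this statement: it is quoted as a classical theorem, with the proof delegated to van der Kallen, Maazen and Stienstra \cite{vanderKallen1975, vanderKallen1977}. So there is no internal argument to compare you against; your proposal has to be measured against the cited literature. In outline it does match that literature's shape: one forms the presented group $D(R)$, gets $\psi\colon D(R)\to K_2(R)$ from the verified relations (D1)--(D3), proves generation separately (Dennis--Stein), and obtains injectivity by exhibiting a central extension $1\to D(R)\to G\to E(R)\to 1$ and comparing it with $St(R)$, which is the universal central extension of $E(R)$ for \emph{any} ring --- so that framework, and your observation that the local and $3$-fold stable hypotheses must be spent through genuinely different mechanisms (radical versus general position), are both sound.

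The problem is that every load-bearing step is deferred, and the deferred steps are the theorem. Concretely: (1) in the local-case surjectivity, the claim that $K_2(R,\frak m)$ is generated by the relative symbols $\pid{a,b}$ with $a\in\frak m$ is itself a nontrivial theorem of Dennis and Stein about radical ideals, not a formal consequence of $1+ab\in\mult R$; as written you assert exactly what carries the weight. (2) In the $3$-fold stable case, ``represent a class at the $St_3$ level'' presupposes surjective stability $K_2(3,R)\to K_2(R)$; this does hold because $1$-fold stability forces stable rank one, but it is a substantial stability theorem (Dennis--Vaserstein, van der Kallen) that must be invoked, and the subsequent ``rewrite via (D3)'' manipulations in $St_3$ are where the actual symbol calculus lives. (3) For injectivity you correctly isolate the coherence of the cocycle defining $G$ --- well-definedness of the formulas across the choices made in general position, respectively across choices of representatives mod $\frak m$ --- as the crux, but you supply no construction and no verification; this consistency problem is precisely what occupies almost the entirety of the cited papers, and in the local case even the reduction to a radical-pair statement needs care, since $R\to R/\frak m$ need not split. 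In short, your proposal is an accurate map of where the difficulties lie and of the route the classical proofs take, but as a proof it has a genuine gap at each of the three points above: nothing beyond the formal setup ($\psi$ exists; a summand argument would finish given $G$ and generation) is actually established.
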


\begin{remark}
To be precise, the Dennis-Stein symbol is defined to be
\[\pid{a,b}=x_{21}\left(\frac{-b}{1+ab}\right)x_{12}(a)x_{21}(b) x_{12}\left(\frac{-a}{1+ab}\right)h_{12}(1+ab)\in K_2(R),\]
though we will never need this formula.
\end{remark}

\subsection{The Dennis-Stein-Suslin-Yarosh $\rho$ map}\label{section_Dennis_Stein_map}
Dennis and Stein proved in \cite{Dennis1975} that if $\roi_K$ is a discrete valuation ring with field of fractions $K$, then $K_2(\roi_K)\to K_2(K)$ is injective. They did this by constructing a certain homomorphism $\rho:1+t\roi_K\to K_2(\roi_K)$ (where $t\in\roi_K$ is a uniformiser) such that the image of $\rho(u)$ in $K_2(K)$ is $\{u,t\}$. The map was extended to the higher degree Milnor $K$-theory of $\roi_K$ by Suslin and Yarosh in \cite{Suslin1991}. 

Dennis and Stein required rather intricate usage of their so-called $(s,t)$-identities between Steinberg symbols (for example, to prove the following two results) since, at the time, they did not know that they could describe $K_2(\roi_K)$ using their Dennis-Stein symbols. With this hindsight, it is not difficult to give a much easier and more general definition of their $\rho$ map; the payoff is the co-Cartesian square of proposition \ref{proposition_the_cocartesian_square}, a version of which appeared in their work. 

Let $R$ be a ring. For $t\in R$ a non-zero divisor, write $\mult{(1+tR)}=(1+tR)\cap\mult R$, i.e.~the elements of $1+tR$ which are units in $R$; note that $\mult{(1+tR)}$ is a group. Define \[\rho_t:\mult{(1+tR)}\to K_2(R),\quad 1+ta\mapsto\pid{a,t},\] where the right hand side is a Dennis-Stein symbol.

\begin{lemma}\label{lemma_properties_of_DS_map}
$\rho_t$ is a homomorphism of groups. Moreover, if $s\in R$ is another zero-divisor and $f\in \mult{(1+stR)}$, then \[\rho_{st}(f)=\rho_s(f)+\rho_t(f).\]
\end{lemma}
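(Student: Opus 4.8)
The plan is to derive both statements directly from the Dennis--Stein relations (D1)--(D3); the content is entirely in choosing which relation to apply and in checking that the invertibility hypotheses needed to invoke each one are satisfied.

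First I would dispose of well-definedness. Since $t$ is a non-zero-divisor, the coefficient $a$ in a presentation $1+ta$ of an element of $1+tR$ is uniquely determined (if $ta=ta'$ then $a=a'$), so the assignment $1+ta\mapsto\pid{a,t}$ is unambiguous; the same applies to $\rho_s$ and $\rho_{st}$ once $s$, hence $st$, is assumed to be a non-zero-divisor.

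For the homomorphism property I would take $1+ta,1+tb\in\mult{(1+tR)}$ and compute $(1+ta)(1+tb)=1+t(a+b+abt)$, so that $\rho_t$ of the product is $\pid{a+b+abt,t}$; the task is to identify this with $\pid{a,t}+\pid{b,t}$. The obstacle here is that (D2) combines only symbols sharing a first entry, which $\pid{a,t}$ and $\pid{b,t}$ do not. The remedy is to flip both via (D1), writing $\pid{a,t}=-\pid{-t,-a}$ and $\pid{b,t}=-\pid{-t,-b}$; now (D2) applies with common first entry $-t$ --- its hypotheses $1+(-t)(-a)=1+ta\in\mult R$ and $1+tb\in\mult R$ hold by assumption --- and gives $\pid{-t,-a}+\pid{-t,-b}=\pid{-t,-(a+b+abt)}$. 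A final application of (D1) returns this to $\pid{a+b+abt,t}$, completing the step.

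For the second identity I would write $f=1+sta$ and record its three admissible presentations: as $1+(st)a$, as $1+s(ta)$, and as $1+t(sa)$. Since $f\in\mult R$ lies in each of $1+stR$, $1+sR$, and $1+tR$, all three of $\rho_{st}(f)=\pid{a,st}$, $\rho_s(f)=\pid{ta,s}$, and $\rho_t(f)=\pid{sa,t}$ are defined. The desired equality $\pid{a,st}=\pid{ta,s}+\pid{sa,t}$ is then precisely relation (D3) applied with the triple $(a,t,s)$, whose hypothesis $1+ats=f\in\mult R$ holds. Thus the only genuinely non-mechanical point is the interleaving of (D1) and (D2) in the homomorphism step; the rest is bookkeeping of presentations and unit conditions.
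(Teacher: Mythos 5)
Your proof is correct and takes essentially the same route as the paper: the paper likewise obtains the homomorphism property from relations (D1) and (D2) and reduces the second identity, after writing $f=1+sta$, to $\pid{a,st}=\pid{as,t}+\pid{at,s}$, which is (D3); you have simply made explicit the (D1)-flips and unit-hypothesis checks that the paper leaves implicit. Your preliminary remark on well-definedness (and your tacit reading of the statement's ``zero-divisor'' as the intended ``non-zero-divisor'' for $s$) is consistent with the paper's setup.
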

\begin{proof}
Relations (D1) and (D2) show that $\rho_t$ is a homomorphism. Writing $f=1+sta$ for some $a\in R$, the second claim is equivalent to \[\pid{a,st}=\pid{as,t}+\pid{at,s},\] which is immediate from relation (D3).
\end{proof}

\begin{corollary}\label{corollary_identity_for_DS_map}
Suppose $u\in\mult R$ is such that $1+t^lu\in\mult R$ for some $l\ge1$; then \[l\rho_t(1+t^lu)=\{-u,1+t^lu\}\] in $K_2(R)$.
\end{corollary}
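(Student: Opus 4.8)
The plan is to reduce the left-hand side to a single Dennis--Stein symbol by exploiting the behaviour of $\rho$ under factoring its subscript (the second part of lemma \ref{lemma_properties_of_DS_map}), and then to convert that symbol into a Steinberg symbol using the hypothesis that $u$ is a unit.

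First I would establish, by induction on $l$, the identity $\rho_{t^l}(f)=l\,\rho_t(f)$ for every $f\in\mult{(1+t^lR)}$. The base case $l=1$ is a tautology. For the inductive step, observe that $1+t^lR\subseteq 1+t^{l-1}R\subseteq 1+tR$, so any $f\in\mult{(1+t^lR)}$ also lies in $\mult{(1+t^{l-1}R)}$ and in $\mult{(1+tR)}$; hence all the relevant symbols are defined. Applying lemma \ref{lemma_properties_of_DS_map} with $s=t^{l-1}$, so that $st=t^l$, gives $\rho_{t^l}(f)=\rho_{t^{l-1}}(f)+\rho_t(f)$, and the inductive hypothesis then yields $\rho_{t^l}(f)=(l-1)\rho_t(f)+\rho_t(f)=l\,\rho_t(f)$.

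Next I would apply this to $f=1+t^lu$, which is a unit by hypothesis and manifestly lies in $\mult{(1+t^lR)}$. Unwinding the definition of $\rho_{t^l}$ on the element $1+t^lu=1+(t^l)u$ gives $\rho_{t^l}(1+t^lu)=\pid{u,t^l}$, and therefore $l\,\rho_t(1+t^lu)=\pid{u,t^l}$. Finally, since $u\in\mult R$, the formula for a Dennis--Stein symbol with a unit first entry gives $\pid{u,t^l}=\{-u,1+t^lu\}$; this is precisely where the hypothesis $u\in\mult R$ (rather than merely $t^lu\in\mult R$) is needed, and it completes the proof.

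I do not anticipate a serious obstacle here: the argument is essentially a bookkeeping exercise built from the two facts already in hand. The only points requiring a little care are verifying that every symbol appearing is actually defined---that is, that the arguments lie in the appropriate groups $\mult{(1+t^jR)}$, which follows from the nesting $1+t^lR\subseteq\dots\subseteq 1+tR$---and selecting the correct branch of the unit-reduction formula for Dennis--Stein symbols, namely the branch valid when the \emph{first} argument is a unit.
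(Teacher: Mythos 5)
Your proof is correct and takes essentially the same route as the paper: iterate the factoring identity of lemma \ref{lemma_properties_of_DS_map} to obtain $l\rho_t(1+t^lu)=\rho_{t^l}(1+t^lu)=\pid{u,t^l}$, then apply the unit-first-entry comparison $\pid{u,t^l}=\{-u,1+t^lu\}$. The only difference is that you spell out the induction (and the nesting $\mult{(1+t^lR)}\subseteq\mult{(1+t^{l-1}R)}$ needed to invoke the inductive hypothesis) that the paper leaves implicit in its one-line appeal to the lemma.
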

\begin{proof}
By the previous lemma, \[l\rho_t(1+t^lu)=\rho_{t^l}(1+t^lu)=\pid{u,t^l},\] which equals $\{-u,1+t^lu\}$ by the comparison between Dennis-Stein and Steinberg symbols.
\end{proof}

\begin{remark}
Suppose that $(1+tR)^\times$ is generated by its elements of the form $1+wt$ with $w\in\mult R$. For example, this is true if $R$ is $3$-fold stable (see (A3) on page \ref{page_A3}), or if $R$ is local and $t\in\frak m_R$. Then the comparison between Steinberg and Dennis-Stein symbols implies that the homomorphism $\rho_t$ is characterised by the fact that \[\rho_t(1+wt)=\{-w,1+wt\}\] for all $w\in\mult R$ for which $1+wt\in\mult R$.

In the case of a discrete valuation ring with $t$ being a uniformiser, Dennis and Stein started with this as the definition of $\rho_t$ on such elements, and proved that it extended to a homomorphism under the rule \[\rho_t(1+at)=\left\{-\frac{1+a}{1-t},\frac{1+at}{1-t}\right\}\tag{\dag}\] whenever $a$ is in the maximal ideal. However, their proof works whenever $R$ is local and $t\in\frak m_R$. So we can therefore conclude that if $R$ is a local domain and $t,a\in\frak m_R$, then formula (\dag) remains valid under our definition of $\rho_t$; i.e., $\pid{a,t}=\left\{-\frac{1+a}{1-t},\frac{1+at}{1-t}\right\}$.
\end{remark}

\section{The main calculations}\label{section_main_part}
In this section we prove the main result. First we will construct, under a number of assumptions, a short exact sequence (see corollary \ref{corollary_the_ses}), which serves as a $K_2^M$-analogue of the sequence in remark \ref{remark_preliminary_comments_on_R_t}. From this, the proof that Milnor and Quillen $K_2$ coincide will proceed by a slightly tricky induction; to keep this as clear as possible, we carefully label each assumption as it appears.

Let $R$ be a domain and suppose that $t\in R$ is a prime element of $R$. Write $\nu=\nu_{t}$ for the $t$-adic discrete valuation on $\Frac R$, with ring of integers $R_{tR}$. To start, we make the following assumption, which we briefly discussed in remark \ref{remark_preliminary_comments_on_R_t}:
\begin{description}
\item[\bf(A1)] Assume that $R_t\cap R_{tR}=R$. (E.g.~This is true if $R$ is normal; see the aforementioned remark.)
\end{description}
The assumption implies that any unit $f$ in $R_t$ may be written as $f=ut^n$ for some unique $u\in\mult R$ and $n\in\bb Z$; indeed, $n=\nu(f)$ and $u=ft^{-\nu(f)}$. Supposing that $g=vt^m$ is another unit of $R_t$, written in the same way, the tame symbol $c(f,g)$ is defined in the usual way as \[c(f,g):=(-1)^{\nu(f)\nu(g)} f^{\nu(g)}g^{-\nu(f)}=(-1)^{nm}u^mv^{-n}\in\mult R.\] 

\begin{lemma}
Let $R,t$ be as above, satisfying (A1). If $f,g\in\mult{R_t}$ satisfy $f+g=1$, then $c(f,g)\in\mult{(1+tR)}$ and \[\rho_t(c(f,g))=-\{ft^{-\nu(f)},gt^{-\nu(g)}\}\] in $K_2(R)$, where the left side is the Dennis-Stein-Suslin-Yarosh map from section \ref{section_Dennis_Stein_map}.
\end{lemma}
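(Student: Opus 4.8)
The plan is to reduce the tame symbol, in every case, to a power of an element of the shape $1+t^l\cdot(\text{unit})$ and then apply Corollary~\ref{corollary_identity_for_DS_map} together with the fact that $\rho_t$ is a homomorphism on the group $\mult{(1+tR)}$. Write $f=ut^n$ and $g=vt^m$ with $u,v\in\mult R$ as provided by (A1), so that $n=\nu(f)$, $m=\nu(g)$, and the right-hand side reads $-\{u,v\}$.

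First I would pin down the possible valuations. Since $f+g=1$ and $\nu(1)=0$, the inequality $\nu(f+g)\ge\min(n,m)$, with equality when $n\neq m$, leaves exactly three possibilities: (i) $n=m=0$; (ii) $n=0<m$ (or symmetrically $m=0<n$); (iii) $n=m<0$. I would also record a symmetry: $c(g,f)=c(f,g)^{-1}$ forces $\rho_t(c(g,f))=-\rho_t(c(f,g))$, while skew-symmetry sends $-\{v,u\}$ to the negative of $-\{u,v\}$, so both sides of the asserted identity change sign under $f\leftrightarrow g$; hence it suffices to treat one orientation of case (ii).

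Next I would dispatch (i) and (ii). In (i) the tame symbol is $1$ and both sides vanish, the right-hand side by the Steinberg relation $\{f,1-f\}=0$. In (ii) one computes $c(f,g)=f^m$; reducing mod $t$ gives $f\equiv1$, so $c(f,g)\in\mult{(1+tR)}$, and $g=1-f$ rewrites $f=1+t^m(-v)$. The homomorphism property gives $\rho_t(c(f,g))=m\,\rho_t(f)$, and Corollary~\ref{corollary_identity_for_DS_map} applied to $1+t^m(-v)$ yields $m\,\rho_t(f)=\{v,f\}=-\{f,v\}$, which is exactly the desired $-\{ut^{0},gt^{-m}\}$.

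The crux is case (iii), $n=m=k<0$. Here $u+v=t^{-k}$, so $-u/v\equiv1\pmod t$ and $c(f,g)=(-u/v)^k\in\mult{(1+tR)}$. Setting $w:=-u/v=1+t^{-k}(-v^{-1})$, Corollary~\ref{corollary_identity_for_DS_map} gives $(-k)\,\rho_t(w)=\{v^{-1},w\}$, whence $\rho_t(c(f,g))=k\,\rho_t(w)=\{v,w\}$. What remains is purely symbolic: expanding $\{v,-u/v\}$ by bilinearity and using skew-symmetry together with $2\{v,-1\}=\{v,1\}=0$ collapses it to $-\{u,v\}$. I expect this final symbol manipulation to be the only genuinely fiddly point; everything else follows mechanically from the corollary and from $\rho_t$ being a homomorphism.
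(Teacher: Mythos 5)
Your proposal is correct and takes essentially the same approach as the paper: the same trichotomy of valuations forced by $f+g=1$, and in each case the same reduction of $c(f,g)$ to a power of an element $1+t^l\cdot(\text{unit})$ handled by Corollary \ref{corollary_identity_for_DS_map} together with the homomorphism property of $\rho_t$ from Lemma \ref{lemma_properties_of_DS_map}. The only cosmetic differences are your symmetry reduction in case (ii) (the paper simply repeats the argument with the roles swapped) and, in case (iii), your choice $c(f,g)=(-u/v)^k$ in place of the paper's $(1-u^{-1}t^l)^l$, so that your final manipulation invokes $2\{v,-1\}=\{v,1\}=0$ where the paper uses $\{u^{-1},-u^{-1}\}=0$ directly.
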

\begin{proof}
Write $f=ut^n$, $g=vt^m$ as above; we must show that $c(f,g)\in 1+tR$ (we already know it is a unit) and that $\rho_t(c(f,g))=-\{u,v\}$. The proof is a simple case-by-case analysis:
\begin{description}
\item[Case: $m>0$.] Then $n=0$ and $u=1-vt^m$. So $c(f,g)=u^m\in1+t^mR$ and \[\rho_t(c(f,g))=m\rho_t(1-vt^m)\] as $\rho_t$ is a homomorphism (lemma \ref{lemma_properties_of_DS_map}). But corollary \ref{corollary_identity_for_DS_map} implies $m\rho_t(1-vt^m)=\{v,1-vt^m\}=\{v,u\}$, as required.
\item[Case: $n>0$.] Proceed as in the previous case.
\item[Case: $n=m=0$.] Then $u+v=1$, whence $\{u,v\}=0$, and secondly $c(f,g)=1$. So the claim is trivial.
\item[Case: $m<0$.] Then $n=m$ (set $l=-n$ for clarity), $u+v=t^l$, and \[c(f,g)=(-1)^lu^{-l}v^l=(1-u^{-1}t^l)^l\in 1+t^lR.\] Using the fact that $\rho_t$ is a homomorphism and corollary \ref{corollary_identity_for_DS_map}, we see that  \[\rho_t(c(f,g))=l\rho_t(1-u^{-1}t^l)=\{u^{-1},1-u^{-1}t^l\}=\{u^{-1},-vu^{-1}\}.\] But this symbol equals $\{u^{-1},v\}=-\{u,v\}$, as required.\qedhere
\end{description}
\end{proof}

Before the next proposition, we must impose another two assumptions:
\begin{description}
\item[(A2)] Assume that $R$ is weakly $5$-fold stable and that $K_2^M(R)\to K_2(R)$ is an isomorphism. (E.g.~This is true if $R$ is $5$-fold stable.)
\item[(A3)] \label{page_A3} Assume that the group $\mult{(1+tR)}$ is generated by its elements $1+tw$ satisfying the extra condition that $w\in\mult R$. (E.g.~This is true if $R$ is $3$-fold stable: Let $a\in R$ be such that $1+at$ is a unit. Applying $3$-fold stability to $\pid{a,-1},\pid{1,t},\pid{0,1}$ supplies us with a unit $v\in\mult R$ for which $a-v$ and $1+vt$ are also units. Put $w=\frac{a-v}{1+tv}\in\mult R$. Then $(1+vt)(1+wt)=1+at$, which implies that $1+wt$ is a unit and completes the proof.)
\end{description}
Under assumption (A2) we freely identify the two $K_2$-groups and even allow ourselves to think of Dennis-Stein symbols as elements of $K_2^M(R)$; this assumption also implies that $R_t$ is weakly $5$-fold stable (lemma \ref{lemma_weak_stability}) and therefore that $K_2^M(R_t)$ satisfies skew-symmetry (lemma \ref{lemma_skew_symmetry_from_other_relations}), which we will use without mention.

\begin{proposition}[c.f.~\cite{Dennis1975} \cite{Suslin1991}]\label{proposition_the_cocartesian_square}
Let $R,t$ be as above, satisfying (A1) -- (A3). Then
\[\xymatrix{
\mult{(1+tR)}\ar[r]^j\ar[d]_{\rho_t}&\mult R\ar[d]^{\{\cdot,t\}}\\
K_2^M(R)\ar[r] &K_2^M(R_t)
}\]
is a co-Cartesian square of abelian groups.
\end{proposition}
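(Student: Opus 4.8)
The plan is to identify $K_2^M(R_t)$ with the pushout $P$ of $\mult R\xleftarrow{\,j\,}\mult{(1+tR)}\xrightarrow{\,\rho_t\,}K_2^M(R)$. Writing $[u]$ and $\iota(\xi)$ for the images in $P=(\mult R\oplus K_2^M(R))/N$ of $u\in\mult R$ and $\xi\in K_2^M(R)$, the subgroup $N$ encodes exactly the relations $[f]=\iota(\rho_t(f))$ for $f\in\mult{(1+tR)}$. I would prove the proposition by exhibiting mutually inverse homomorphisms between $P$ and $K_2^M(R_t)$.

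First I would verify that the square commutes, i.e.\ that $\{f,t\}=\phi(\rho_t(f))$ in $K_2^M(R_t)$ for all $f\in\mult{(1+tR)}$, where $\phi\colon K_2^M(R)\to K_2^M(R_t)$ is the base-change map. As both $f\mapsto\{f,t\}$ and $\rho_t$ are homomorphisms, assumption (A3) reduces this to the generators $f=1+tw$ with $w\in\mult R$. For these the remark following corollary \ref{corollary_identity_for_DS_map} gives $\rho_t(1+tw)=\{-w,1+tw\}$, a \emph{genuine Steinberg symbol} in $K_2^M(R)$, so that $\phi(\rho_t(f))=\{-w,f\}$. In $K_2^M(R_t)$ the element $1-f=-tw$ is a unit, so the Steinberg relation (S3) yields $\{f,-1\}+\{f,w\}+\{f,t\}=0$; combining this with skew-symmetry — valid in $K_2^M(R_t)$ by lemma \ref{lemma_skew_symmetry_from_other_relations}, since $R_t$ is weakly $5$-fold stable by lemma \ref{lemma_weak_stability} and (A2) — gives $\{f,t\}=\{-w,f\}=\phi(\rho_t(f))$, as needed. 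It is essential that this comparison be carried out entirely within Milnor $K$-theory: one cannot pass to $K_2(R_t)$, since the injectivity of $K_2^M(R_t)\to K_2(R_t)$ is precisely what is ultimately at stake. The commuting square now induces $\pi\colon P\to K_2^M(R_t)$ with $[u]\mapsto\{u,t\}$ and $\iota(\xi)\mapsto\phi(\xi)$; writing $f=ut^n$, $g=vt^m$ with $u,v\in\mult R$ via (A1) (so $n=\nu(f)$, $m=\nu(g)$), skew-symmetry gives the expansion $\{ut^n,vt^m\}=\{u,v\}+m\{u,t\}-n\{v,t\}+nm\{-1,t\}$, whence $\pi$ is surjective.

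For the inverse I would use the unique decomposition of (A1) to define, on the free abelian group on the symbols of $R_t$, the map $\lambda(\{ut^n,vt^m\})=\iota(\{u,v\})+m[u]-n[v]+nm[-1]$, and then check that it respects the defining relations (S1) and (S3) of $K_2^M(R_t)$. Bilinearity (S1) is immediate from bilinearity of $\{\cdot,\cdot\}$ in $K_2^M(R)$ and additivity of $u\mapsto[u]$. The crux is (S3): given $f,g\in\mult{R_t}$ with $f+g=1$, the lemma above computing $\rho_t(c(f,g))$ shows that the tame symbol $c(f,g)=(-1)^{nm}u^mv^{-n}$ lies in $\mult{(1+tR)}$ and satisfies $\rho_t(c(f,g))=-\{u,v\}$. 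Hence in $P$ one has, on the one hand, $[c(f,g)]=\iota(\rho_t(c(f,g)))=-\iota(\{u,v\})$, and on the other, expanding $c(f,g)$ multiplicatively, $[c(f,g)]=m[u]-n[v]+nm[-1]$; equating the two expressions gives precisely $\lambda(\{f,g\})=0$.

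It then remains to check that $\pi$ and $\lambda$ are mutually inverse, which is routine on generators: $\pi\lambda=\mathrm{id}$ recovers the displayed symbol expansion, while $\lambda\pi=\mathrm{id}$ follows from $\lambda(\{u,t\})=[u]$ (using $\{u,1\}=0$) and $\lambda(\{u,v\})=\iota(\{u,v\})$ for $u,v\in\mult R$. This proves $\pi$ is an isomorphism, i.e.\ the square is co-Cartesian. I expect the main obstacle to lie in the second paragraph rather than the last two: the genuine content of the proposition is the tame-symbol lemma, which makes the Steinberg relation for $\lambda$ fall out immediately, so the delicate point is instead keeping the commutativity check inside Milnor $K$-theory and invoking (A3) correctly to reduce to Steinberg-symbol generators.
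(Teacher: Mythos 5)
Your proposal is correct and takes essentially the same route as the paper: commutativity is checked on the generators $1+tw$ supplied by (A3) via $\rho_t(1+tw)=\{-w,1+tw\}$ and skew-symmetry in $K_2^M(R_t)$, surjectivity comes from the symbol expansion using the decomposition $f=ut^{\nu(f)}$ of (A1), and your map $\lambda$ is exactly the paper's $\{f,g\}\mapsto(\{ft^{-\nu(f)},gt^{-\nu(g)}\},c(f,g))\bmod\Im\Delta$ written additively, with well-definedness resting, as in the paper, on the tame-symbol lemma. The only cosmetic difference is that you verify both composites $\pi\lambda$ and $\lambda\pi$, whereas the paper checks surjectivity of $X\to K_2^M(R_t)$ together with one composite being the identity.
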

\begin{proof}
Since both maps from $\mult{(1+tR)}$ to $K_2^M(R_t)$ are homomorphisms, it is enough, by (A3), to check the commutativity of the diagram on elements of $\mult{(1+tR)}$ having the form $1+wt$ for some $w\in\mult R$. For such an element, $\rho_t(1+tw)=\pid{w,t}=\{-w,1+wt\}$ in $K_2^M(R)$; the image of this in $K_2^M(R_t)$ is \[\{-w,1+wt\}=\{-wt,1+wt\}-\{t,1+wt\}=0+\{1+wt,t\},\] as required.

Given $f=ut^n,\;g=vt^m\in \mult R_t$, written as above, we have, in $K_2^M(R_t)$, \[\{f,g\}=\{u,v\}+\{u^m,t\}+\{t,v^n\}+\{t,t\}=\{u,v\}+\{c(f,g),t\}.\] This shows that \[K_2^M(R)\oplus\mult R\to K_2^M(R_t),\quad (\{u,v\},c)\mapsto \{u,v\}+\{c,t\}\] is surjective.

Let $\Delta:\mult{(1+tR)}\to K_2^M(R)\oplus\mult R$ be the map $(-\rho_t,j)$, so that $X=(K_2^M(R)\oplus\mult R)/\Im\Delta$ is the pushout which we wish to show is equal to $K_2^M(R_t)$. The previous lemma shows that the homomorphism \[K_2^M(R_t)\to X,\quad \{f,g\}\mapsto(\{ft^{-\nu(f)},gt^{-\nu(g)}\},c(f,g))\mod{\Im\Delta}\] is well-defined. But we have just shown that the natural map $X\to K_2^M(R_t)$ is surjective, and the reader can easily check that $X\to K_2^M(R_t)\to X$ is the identity, thereby completing the proof.
\end{proof}

\begin{corollary}\label{corollary_the_ses}
Let $R,t$ be as above, satisfying (A1) -- (A3). Then there is a short exact sequence \[0\to K_2^M(R)\to K_2^M(R_t)\xto{\res c}\mult R/\mult{(1+tR)}\to 0,\] where $\res c(\{f,g\}):=c(\{f,g\})$ mod $tR$.
\end{corollary}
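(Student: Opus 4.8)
The plan is to deduce the short exact sequence purely formally from the co-Cartesian square of proposition \ref{proposition_the_cocartesian_square}, exploiting the elementary fact that a pushout square of abelian groups one of whose legs is injective gives rise to a short exact sequence. The proposition presents $K_2^M(R_t)$ as the pushout $X=(K_2^M(R)\oplus\mult R)/\Im\Delta$, where $\Delta=(-\rho_t,j)\colon\mult{(1+tR)}\to K_2^M(R)\oplus\mult R$. The decisive observation is that the top leg $j\colon\mult{(1+tR)}\to\mult R$ is literally the inclusion of a subgroup, hence injective. So first I would record the general lemma: for such a pushout with $j$ injective, the canonical map $K_2^M(R)\to X$ is injective, and the projection onto the second factor identifies the cokernel with $\mult R/j(\mult{(1+tR)})=\mult R/\mult{(1+tR)}$.

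Then I would carry out the three verifications in order, all on the pushout model. For injectivity on the left: a class $(\kappa,0)$ lies in $\Im\Delta$ only if $j(f)=1$ and $\kappa=-\rho_t(f)$ for some $f\in\mult{(1+tR)}$; injectivity of $j$ forces $f=1$, hence $\kappa=0$. For the target map, consider $\psi\colon X\to\mult R/\mult{(1+tR)}$, $(\kappa,u)\mapsto \res u$, which is well-defined precisely because the second coordinate of $\Delta(f)$ is $j(f)\in\mult{(1+tR)}$. Exactness in the middle is then bookkeeping: a class in $\ker\psi$ has a representative $(\kappa,u)$ with $u\in\mult{(1+tR)}$, and subtracting $\Delta(u)$ replaces it by $(\kappa+\rho_t(u),0)$, manifestly in the image of $K_2^M(R)$; the reverse inclusion is immediate. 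Surjectivity of $\psi$ is automatic, and explicitly $\{u,t\}\mapsto\res u$.

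The only contact with the specific situation is the identification of the abstract projection $\psi$ with the tame-symbol map $\res c$ of the statement. Here I would invoke the explicit inverse isomorphism built in the proof of the proposition, namely $K_2^M(R_t)\to X$, $\{f,g\}\mapsto(\{ft^{-\nu(f)},gt^{-\nu(g)}\},c(f,g))\bmod\Im\Delta$; composing with $\psi$ gives exactly $\{f,g\}\mapsto c(f,g)\bmod\mult{(1+tR)}$. Since the kernel of $\mult R\to(R/tR)^\times$ is precisely $\mult{(1+tR)}$, reduction modulo $\mult{(1+tR)}$ coincides with reduction mod $tR$, so this is indeed $\res c$. As a bonus, this identification is what shows that $\res c$ is a well-defined homomorphism on all of $K_2^M(R_t)$, and not merely a rule on symbols.

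I expect no serious obstacle, since the genuine work — defining $\rho_t$, verifying the tame-symbol identity $\{f,g\}=\{u,v\}+\{c(f,g),t\}$, and establishing that the square is co-Cartesian — is already contained in the proposition. The one subtlety to watch is orientation and signs in the pushout presentation: one must keep $\rho_t$ on the vertical leg and $j$ on the horizontal leg, and track the sign in $\Delta=(-\rho_t,j)$, so that the resulting cokernel is $\mult R/\mult{(1+tR)}$ rather than something entangled with $\rho_t$. Once this bookkeeping is arranged correctly, the injectivity of $j$ is exactly the ingredient that makes $K_2^M(R)\to K_2^M(R_t)$ injective, and the corollary falls out.
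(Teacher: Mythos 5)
Your proposal is correct and is precisely the argument the paper intends: the corollary is stated as an immediate consequence of proposition \ref{proposition_the_cocartesian_square}, obtained exactly as you do by noting that the leg $j\colon\mult{(1+tR)}\to\mult R$ is an injection of a subgroup, so the pushout presentation $X=(K_2^M(R)\oplus\mult R)/\Im\Delta$ yields injectivity of $K_2^M(R)\to K_2^M(R_t)$ and identifies the cokernel with $\mult R/\mult{(1+tR)}$ via the tame symbol. Your bookkeeping (well-definedness of $\psi$, the subtraction of $\Delta(u)$ for middle exactness, and the identification of $\psi$ with $\res c$ through the explicit inverse map) is exactly the routine verification the paper leaves to the reader.
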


\begin{example}\label{example_the_ses}
\begin{enumerate}
\item Suppose that $\roi_K$ is a discrete valuation ring whose residue field $k$ has $>5$ elements; let $K$ be its fraction field and $t\in \roi_K$ a uniformiser. Then the pair $\roi_K,t$ satisfy assumptions (A1) -- (A3) and, moreover, $K_2^M(F)\cong K_2(F)$ by Mastumoto's theorem \cite[\S12]{Milnor1970}. We deduce that \[0\to K_2(\roi_K)\to K_2(K)\to \mult k\to0\] is exact, which is the main result of \cite{Dennis1975}.
\item Suppose that $A$ is a local domain whose residue field has $>5$ elements. Then the pair $A[[t]],t$ satisfies (A1) -- (A3) and so the sequence \[0\to K_2^M(A[[t]])\to K_2^M(A((t)))\to\mult A\to 0\] is exact.
\item Suppose that $A$ is a normal, local ring whose residue field $k$ has $>5$ elements, and let $t\in A$ be any prime element. Then the pair $A,t$ satisfies (A1) -- (A3) and so the sequence \[0\to K_2^M(A)\to K_2^M(A_t)\to\mult{(A/tA)}\to 0\] is exact.
\end{enumerate}
\end{example}

The key inductive step of our main proof is contained in the following proposition, in which we give conditions under which the short exact sequence of the corollary forces $K_2^M(R_t)\to K_2(R_t)$ to be an isomorphism.

\begin{proposition}
Let $R,t$ be as above, satisfying (A1) -- (A3). Suppose further that $R$ and $R/tR$ are regular, and that
\begin{description}
\item[(A4)] $\mult R\to\mult{(R/tR)}$ is surjective. (E.g.~This is true if $R$ is $1$-fold stable by remark \ref{remark_1_fold_stable}.)
\item[(A5)] $K_1(R/tR)=\mult{(R/tR)}$ and $K_2(R/tR)=K_2^M(R/tR)$.
\end{description}
Then $K_2^M(R_t)\to K_2(R_t)$ is an isomorphism.
\end{proposition}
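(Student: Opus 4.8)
The plan is to compare the Milnor short exact sequence of Corollary~\ref{corollary_the_ses} with the Quillen localisation sequence and then to conclude by a five-lemma argument. First I would rewrite the target of the tame symbol: assumption (A4) says $\mult R\to\mult{(R/tR)}$ is surjective with kernel $\mult{(1+tR)}$, so $\mult R/\mult{(1+tR)}\cong\mult{(R/tR)}$ and Corollary~\ref{corollary_the_ses} becomes
\[0\to K_2^M(R)\to K_2^M(R_t)\xto{\res c}\mult{(R/tR)}\to 0.\]
Because $R$ and $R/tR$ are regular, Quillen $K$-theory agrees with $G$-theory throughout, and the localisation sequence for the regular divisor $\Spec R/tR\into\Spec R$ with open complement $\Spec R_t$ supplies
\[K_2(R/tR)\xto{i_*}K_2(R)\to K_2(R_t)\xto{\partial}K_1(R/tR)\xto{i_*}K_1(R).\]
The aim is to assemble these into a commutative ladder whose outer vertical maps are the isomorphisms $K_2^M(R)\isoto K_2(R)$ (from (A2)) and $\mult{(R/tR)}=K_1(R/tR)$ (from (A5)), so that the middle map $K_2^M(R_t)\to K_2(R_t)$ is forced to be an isomorphism.

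The first task is to turn the bottom sequence into a short exact sequence $0\to K_2(R)\to K_2(R_t)\xto{\partial}K_1(R/tR)\to0$ by showing both transfer maps $i_*$ vanish. For $K_1$, I would represent a class of $K_1(R/tR)=\mult{(R/tR)}$ by the module $R/tR$ equipped with multiplication by a unit $\res u\in\mult{(R/tR)}$; lifting $\res u$ to a unit $u\in\mult R$ (possible by (A4)) and using the resolution $0\to R\xto{t}R\to R/tR\to0$, the lifted automorphism of this complex is multiplication by $u$ in both degrees, so the class pushes forward to $[u]-[u]=0$ in $K_1(R)$; hence $\partial$ is surjective. For $K_2$, by (A5) the group $K_2(R/tR)=K_2^M(R/tR)$ is generated by symbols $\{\res u,\res v\}=i^*\{u,v\}$ with $u,v\in\mult R$ lifting $\res u,\res v$; the projection formula then gives $i_*\{\res u,\res v\}=\{u,v\}\cdot i_*[R/tR]$, and $i_*[R/tR]=[R]-[R]=0$ in $K_0(R)$ by the same resolution, so $i_*=0$ on $K_2$ and $K_2(R)\to K_2(R_t)$ is injective.

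Next I would check the ladder commutes. The left square commutes by naturality of $K_2^M\to K_2$ under $R\to R_t$. The right square is the assertion that the Quillen boundary map $\partial$ agrees, on symbols, with the tame symbol $\res c$ under $K_1(R/tR)=\mult{(R/tR)}$; this is the classical computation of the $K_2$-boundary in a localisation sequence, which can be reduced by functoriality to the discrete valuation ring $R_{tR}$. With both rows short exact, the outer verticals isomorphisms, and the squares commutative, the five lemma yields that $K_2^M(R_t)\to K_2(R_t)$ is an isomorphism.

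The main obstacle I anticipate is precisely this right-hand square: pinning down that the Quillen boundary map is exactly $\res c$, with the sign and normalisation built into $c(f,g)$, rather than merely agreeing up to a unit or a sign. Everything else (the two transfer computations and the five-lemma bookkeeping) is routine once the regularity hypotheses are in force; in fact only the vanishing of the $K_2$-transfer is strictly needed, for injectivity, while surjectivity of $K_2^M(R_t)\to K_2(R_t)$ can be extracted from exactness of the localisation sequence together with surjectivity of $\res c$ alone.
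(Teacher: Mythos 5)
Your proposal is correct and takes essentially the same route as the paper: both compare the Milnor short exact sequence of corollary \ref{corollary_the_ses} with the Quillen localisation sequence for $R\to R_t$, kill the two transfer maps by combining $i_*[R/tR]=0$ in $G_0(R)$ (via the resolution $0\to R\xto{t}R\to R/tR\to 0$, using regularity to identify $K$- and $G$-theory) with the surjectivity of restriction supplied by (A4) and (A5), and conclude by a five-lemma chase with the outer verticals identified by (A2), (A4), and (A5). Your explicit attention to the right-hand square (that the Quillen boundary is the tame symbol $\res c$, checked by reduction to the discrete valuation ring $R_{tR}$, where any sign discrepancy is harmless since negating an isomorphism preserves it) is a point the paper leaves implicit, but it does not change the argument.
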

\begin{proof}
Let $G_*$ denote the $K$-theory of the category of finitely generated modules over a ring. The following standard argument \cite{vanderKallen1976} shows that $G_*(R)\to G_*(R/tR)\to G_*(R)$ is zero: it is represented by $-\otimes_RR/tR:R\op{-Mod}\to R\op{-Mod}$ and the class of $R/tR$ in $G_0(R)$ is trivial. Thus $K_*(R)\to K_*(R/tR)\to K_*(R)$ is zero. Assumption (A4) and the first part of (A5) implies that $\mult R\to K_1(R)\to K_1(R/tR)=(R/tR)^\times$ is surjective, and so it follows that $K_1(R/tR)\to K_1(R)$ is zero. Similarly, assumption (A4) and the second part of (A5) imply that $K_2(R)\to K_2(R/tR)$ is surjective, and so $K_2(R/tR)\to K_2(R)$ is zero.

The localisation sequence for $K$-theory therefore produces the short exact sequence \[0\to K_2(R)\to K_2(R_t)\to K_1(R/tR)\to 0.\] We compare this with the short exact sequence of the previous corollary to get the commutative diagram
\[\xymatrix{
0\ar[r]&K_2^M(R)\ar[r]\ar[d]&K_2^M(R_t)\ar[r]\ar[d]&\mult R/\mult{(1+tR)}\ar[r]\ar[d]& 0\\
0\ar[r]&K_2(R)\ar[r]&K_2(R_t)\ar[r]&K_1(R/tR)\ar[r]& 0
}\]
The left vertical arrow is an isomorphism by (A2), and the right vertical arrow is an isomorphism by (A4) and the first part of (A5). The proof is complete.
\end{proof}

Except for a small lemma which we defer for a moment, we have reached the main theorem:

\begin{theorem}
Let $A$ be a regular local ring whose residue field has $>5$ elements, and let $t_1,\dots,t_n\in A$ be irreducible elements with the following property: the quotient of $A$ by an ideal generated by any number of $t_1,\dots,t_n$ is still regular. Then \[K_2^M(A_{t_1\cdots t_n})\to K_2(A_{t_1\cdots t_n})\] is an isomorphism.
\end{theorem}
\begin{proof}
To avoid confusing the exposition with the special case $n=1$, we quickly deal with it now; put $t=t_1$ and note that $A/tA$ is regular. We will be done as soon as we verify that $A,t$ satisfy all the other conditions of the previous proposition: (A1) holds because $A$ is regular, hence normal; (A2) holds because $A$ is $5$-fold stable; (A3) holds because $A$ is $3$-fold stable; (A4) holds because $A$ is $1$-fold stable; the first part of (A5) holds because $A/tA$ is a local ring, and the second part holds because $A/tA$ is $5$-fold stable. This concludes the proof in the case $n=1$.

The remainder of the proof is by induction on $n>1$ using the previous results. We may obviously assume that the $t_1,\dots,t_n$ are pairwise non-associated. Let $R=A_{t_1\cdots t_{n-1}}$ and put $t=t_n$. We will show that the pair $R,t$ satisfies all the conditions of the previous proposition, from which the result then follows.

(A1): By the assumption that $t_1,\dots,t_n$ were pairwise non-associated, we see that $t$ is a prime element of $R$; moreover, $R$ is normal. So the pair $R,t$ satisfy assumption (A1).

(A2): By the inductive hypothesis, $K_2^M(R)\to K_2(R)$ is an isomorphism. Moreover, $R$ is weakly $5$-fold stable by lemma \ref{lemma_weak_stability}. So the pair $R,t$ satisfy (A2).

(A3): This will be covered by the next lemma.

(A4): Set $\res A=A/tA$, $\res R=R/tR$, and let $\res t_i$ denote the image of $t_i$ in $A/tA$, for $i=1,\dots,n-1$; so $\res R=\res A_{\res t_1\cdots\res t_{n-1}}$. Then $\res A$ is regular. Also, for each $i=1,\dots,n-1$, the element $\res t_i$ is non-zero in $\res A$ and $\res A/\res t_i\res A=A/\pid{t_i,t}$ is a regular local ring, hence a domain; so $\res t_i$ is a prime element of $\res A$. By repeatedly applying the comments immediately after the introduction of (A1) at the start of this section, we see that a unit $w$ of $\res R$ may be written as $u\res t_1^{\al_1}\dots \res t_{n-1}^{\al_{n-1}}$ for some $u\in\mult{\res A}$ and $\al_1,\dots,\al_{n-1}\in\bb Z$ (the representation might not be unique because some of the $\res t_1,\dots,\res t_{n-1}$ may be associated to one another). Since $A$ is local, there is a unit $\tilde u\in\mult A$ sitting over $u$; then $\tilde u t_1^{\al_1}\dots t_{n-1}^{\al_{n-1}}\in\mult R$ sits over $w$. This proves that $R,t$ satisfies (A4).

(A5)  The quotient of $\res A$ by the ideal generated by any number of $\res t_1,\dots,\res t_{n-1}$ is still regular, so the inductive hypothesis implies that $K_2^M(\res R)\to K_2(\res R)$ is an isomorphism, while corollary \ref{corollary_K_1_behaves_under_localisation} implies that $K_1(\res R)=\mult{\res R}$. So condition (A5) is satisfied for the pair $R,t$.

Finally, note that $R$ and $R/tR=\res R$ are localisations of regular rings, hence are regular. So the pair $R,t$ satisfy all the required conditions to apply the previous proposition, except possibly (A3), which is rather subtle and which we deal with in the next lemma.
\end{proof}

\begin{lemma}
Let $A,t_1,\dots,t_n$ satisfy the conditions of the theorem, with $t_1,\dots,t_n$ being pairwise non-associated; put $R=A_{t_1\cdots t_{n-1}}$ and $t=t_n$. Then the pair $R,t$ satisfy assumption (A3).
\end{lemma}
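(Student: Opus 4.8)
The plan is to identify $\mult{(1+tR)}$ with the kernel of the reduction map $\mult R\to\mult{(R/tR)}$ and to produce an explicit generating set for this kernel, each member of which is visibly of the permitted form. Since $A$ is a regular local ring it is a UFD, and because the $t_i$ are pairwise non-associated primes, every $f\in\mult R$ can be written as $f=u\,t_1^{a_1}\cdots t_{n-1}^{a_{n-1}}$ with $u\in\mult A$ and $a_i\in\bb Z$; thus $\mult R\cong\mult A\oplus\bb Z^{n-1}$. Reducing modulo $t$, each $\res{t_i}$ is a nonzero prime element of the regular local domain $\res A=A/tA$ (nonzero since $t_i\not\sim t$, and prime since $A/\pid{t_i,t}$ is regular), so I would group the indices into the association classes of the $\res{t_i}$ in $\res A$. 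Writing out the condition $\res f=1$ and comparing valuations at each prime $\res{t_i}$ shows that the exponents sum to zero on each class. A short computation with this description then shows that the kernel is generated by two kinds of elements: (I) the units of $A$ congruent to $1$ modulo $t$, i.e. $\mult{(1+tA)}$; and (II) for each pair $i,j$ in a common class, a ``corrected ratio'' $\gamma^{-1}t_it_j^{-1}$ with $\gamma\in\mult A$ chosen so that the element lies in $\mult{(1+tR)}$.

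The generators of type (I) are easy: because $A$ is local with $t\in\frak m_A$ and is $3$-fold stable, the argument already given in the parenthetical discussion of (A3) applies verbatim to $A$ and shows that $\mult{(1+tA)}$ is generated by the elements $1+tw$ with $w\in\mult A$. As these have $w\in\mult A\sseq\mult R$ and are themselves units of $A\sseq R$, they are admissible generators, so $\mult{(1+tA)}\sseq H$, where $H$ denotes the subgroup of $\mult{(1+tR)}$ generated by the permitted elements $1+tw$, $w\in\mult R$.

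The crux is the type (II) generators, and this is where the regularity hypothesis must be used in an essential way (recall that $R$ itself is not $3$-fold stable, so the easy argument is unavailable). Fix $i,j$ with $\res{t_i}\sim\res{t_j}$ in $\res A$, lift the relating unit to $\gamma\in\mult A$, and write $t_i-\gamma t_j=t\ep$ with $\ep\in A$; it suffices to prove that $\ep\in\mult A$, for then $\gamma^{-1}t_it_j^{-1}=1+t\,\ep\gamma^{-1}t_j^{-1}$ with $\ep\gamma^{-1}t_j^{-1}\in\mult R$ is an admissible generator. The point is an order computation in the associated graded ring: since $A/\pid{t_i,t_j}$ is regular, $t_i$ and $t_j$ form part of a regular system of parameters, so their images in $\frak m_A/\frak m_A^2$ are linearly independent; scaling $t_j$ by the unit $\gamma$ preserves this, whence $t_i-\gamma t_j\notin\frak m_A^2$ has order exactly $1$. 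On the other hand $t$ has order $1$ (as $A/tA$ is regular) and $\op{gr}_{\frak m_A}A$ is a domain, so the factorisation $t_i-\gamma t_j=t\ep$ forces $\ep$ to have order $0$, i.e. $\ep\in\mult A$.

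Finally I would assemble the pieces: using the type (II) relations to replace each class-product $\prod_i t_i^{a_i}$ (which has exponent-sum zero on the class) by an element of $H$ times a unit of $A$, a general $f\in\mult{(1+tR)}$ becomes $f=u'h$ with $h\in H$ and $u'\in\mult A$; since $f,h\in\mult{(1+tR)}$ one gets $u'\in\mult A\cap\mult{(1+tR)}=\mult{(1+tA)}\sseq H$, so $f\in H$. The one genuinely delicate step is the order argument showing $\ep\in\mult A$; everything else is bookkeeping with the unique factorisation in $A$ and the elementary structure of $\mult R$. I expect the identity $\mult A\cap\mult{(1+tR)}=\mult{(1+tA)}$ (i.e. $tR\cap A=tA$) and the reduction to associate primes to be routine, so that the regularity of $A/\pid{t_i,t_j}$ is the load-bearing hypothesis.
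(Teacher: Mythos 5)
Your proof is correct, but it takes a genuinely different route from the paper's. The paper argues element-by-element and by induction on $n$: given $1+at\in\mult R$ with $a\notin A$, it extracts the denominator $t_{n-1}^\al$ of $a$, shows that this forces $\res t_{n-1}$ to be associated to some $\res t_j$ in $A/tA$, derives the relation $t_{n-1}+ct=wt_j$ with $c\in\mult A$ --- proved by unique factorisation in the UFD $A/t_{n-1}A$ followed by lifting the unit along the Jacobson radical --- and then factors $1+at=\left(1+\frac{c}{t_{n-1}}t\right)^\al v$ with $v\in\mult{(1+tR')}$, $R'=A_{t_1\cdots t_{n-2}}$, so that the inductive hypothesis applies to $v$. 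You instead compute the whole group $\mult{(1+tR)}$ at once, with no induction: the kernel description, the decomposition $\mult R\cong\mult A\oplus\bb Z^{n-1}$, and the class-by-class exponent bookkeeping are all sound (the paper uses the same unit decomposition when verifying (A4)). The pivotal identity is the same in both arguments --- whenever $\res t_i\sim\res t_j$ in $A/tA$ one has $t_i-\gamma t_j=t\ep$ with $\ep\in\mult A$ --- but you prove it differently: the paper via unique factorisation in $A/t_{n-1}A$, you via multiplicativity of the $\frak m_A$-adic order in the domain $\op{gr}_{\frak m_A}A\cong k[X_1,\dots,X_d]$, which is equally valid and arguably more transparent. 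Two points you flag as routine deserve a line each in a full write-up: the height computation $\op{ht}\pid{t_i,t_j}=2$ (needed so that regularity of $A/\pid{t_i,t_j}$ really yields linear independence of the images of $t_i,t_j$ in $\frak m_A/\frak m_A^2$), and $tR\cap A=tA$ (the standard fact that a prime ideal disjoint from the multiplicative set is saturated under localisation). What your approach buys is a stronger and more quotable conclusion --- an explicit generating set for $\mult{(1+tR)}$, namely $\mult{(1+tA)}$ together with the corrected ratios $\gamma^{-1}t_it_j^{-1}$ --- rather than a bare verification of (A3); what the paper's buys is an element-wise argument that meshes with the inductive scaffolding of the main theorem and never needs to classify all associate pairs in $A/tA$ up front.
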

\begin{proof}
We've explained the case $n=1$ several times already, so we assume that $n>1$ and proceed inductively.

Suppose $a\in R$ is such that $1+at\in\mult R$. If $a\in A$ then, as we noticed when introducing assumption (A3), the $3$-fold stability of $A$ supplies us with $v,w\in\mult A$ such that $1+at=(1+vt)(1+wt)$ and $1+vt\in\mult A$; so $1+wt\in\mult R$ and this completes the proof in this case. It remains to treat the case that $a\notin A$. This means that $a$ has a denominator containing at least one of $t_1,\dots,t_{n-1}$; after reordering for simplicity, we assume $t_{n-1}$ occurs in the denominator of $a$.

Put $R'=R_{t_1\cdots t_{n-2}}$ ($=A$ if $n=2$), so that $R=R'_{t_{n-1}}$. We have arranged matters so that $a=t_{n-1}^{-\al}b$ for some $\al>0$ and some $b\in R'$ which is not divisible by $t_{n-1}$. Put \[u:=t_{n-1}^\al+bt\in\mult R,\] and note that $u$ is also a unit in $R'_{t_{n-1}R'}$ (since $\al>0$ and $bt$ is not divisible by $t_{n-1}$ in $R'$); as usual, since $t_{n-1}$ is a prime element of $R'$, which is normal, this implies that $u\in\mult{R'}$.

It follows that $t_{n-1}$ mod $tR'$ is a unit in $R'/tR'$. Exactly as we argued in the previous theorem to prove (A4), this means that $\res t_{n-1}=u\res t_1^{\al_1}\cdots\res t_{n-2}^{\al_{n-2}}$ in $R'/tR'$ for some $u\in(A/tA)^\times$ and $\al_1,\dots,\al_{n-2}\in\bb Z$, where we write $\overline{\phantom{t_1}}$ to denote images mod $t$. But, also as we noted when proving (A4) above, $\res t_1,\dots,\res t_{n-1}$ are prime elements of $A/tA$; hence $\res t_{n-1}$ is associated to $\res t_j$ for some $j=1,\dots,n-2$.

Therefore there exists $c\in A$ and $w\in\mult A$ such that \[t_{n-1}+ct=wt_j\tag{\dag}.\] Examining this equation modulo $t_{n-1}$, and using that $t,t_j$ are prime elements of $A/t_{n-1}A$, we deduce that $t$ mod $t_{n-1}$ and $t_j$ mod $t_{n-1}$ are associated and, more importantly, that $c$ mod $t_{n-1}$ is a unit in $A/t_{n-1}A$. Since $t_{n-1}$ is in the Jacobson radical of $A$, this implies $c$ was already a unit of $A$. Having formula (\dag) and the knowledge that $c\in\mult A$, the rest of the proof is less obtuse.

From (\dag) and the formula for $u$, we have
\begin{align*}
1+at
	&=t_{n-1}^{-\al} u\\
	&=\left(w^{-1}t_j^{-1}\left(1+\frac{c}{t_{n-1}}t\right)\right)^\al u\\
	&=\left(1+\frac{c}{t_{n-1}}t\right)^\al v
\end{align*}
where $v:=w^{-\al}ut_j^{-\al}\in\mult{R'}$. So $1+\frac{c}{t_{n-1}}t$ is also in $\mult R$, and we note that $\frac{c}{t_{n-1}}\in\mult R$. Moreover, it follows that $v\in(1+tR)^\times$; hence $v\in(1+tR)^\times\cap\mult{R'}=(1+tR')^\times$, and the inductive hypothesis completes the proof.
\end{proof}

\begin{remark}
I do not know whether the previous lemma remains valid for more general localisations of local rings. It would seem to offer a useful tool in the study of Milnor $K$-theory, especially when combined with the Dennis-Stein-Suslin-Yarosh map.
\end{remark}

\begin{remark}
To see the necessity of some regularity hypotheses for the main theorem to be valid, consider the following situation: let $A$ be a local domain whose residue field has $>5$ elements, and recall from example \ref{example_the_ses}(ii) that the sequence \[0\to K_2^M(A[[t]])\to K_2^M(A((t))\to\mult A\to 0\] is exact. There is an analogous complex in Quillen $K$-theory which results from the fundamental sequence for Laurent polynomials. Namely, \[K_2(A[[t]])\into K_2(A((t)))\onto\mult A\] is a complex and $K_2(A((t)))$ decomposes as an direct sum $K_2(A((t)))=K_2(A[[t]])\oplus \mult A\oplus NK_2(A)$, where $NK_2(A)\cong\op{coker}(K_2(A)\to K_2(A[X]))$ (see \cite{Weibel1980} for a proof).

Comparing the sequences for Milnor and Quillen $K_2$, we see that $K_2^M(A((t)))\to K_2(A((t)))$ is an isomorphism if and only if $NK_2(A)=0$, which is true if $A$ is regular but is not true in general.
\end{remark}

\comment{
\begin{lemma}
Let $A$ be a normal local ring with $>3$ elements in its residue field, and let $t_1,\dots,t_k,t\in A$ be pairwise non-associated prime elements; put $R=A_{t_1\cdots t_k}$. Then
\begin{enumerate}
\item The pair $R,t$ satisfy assumption (A3).
\item Suppose further that $A/tA$ is normal and that each $t_i$ remains prime in $A/tA$. Then the map $\mult R\to (R/tR)^\times$ is surjective.
\end{enumerate}
\end{lemma}
\begin{proof}
We start with the following observation: 

(i): Suppose that $a\in R_t$ is such that $1+ta$ is a unit in $R_t$. We will start by showing that $a\in R$. Well, if not, then after reordering $t_1,\dots,t_k$ for simplicity, we may write \[a=t_1^{-\al_1}\cdots t_r^{-\al_r}b,\] where $1\le r\le n$; $\al_1,\dots,\al_r\ge1$; and $b\in R$ is not divisible by any of $t_1,\dots,t_r$. It follows that \[x:=t_1^{\al_1}\cdots t_r^{\al_r}+tb=t_1^{\al_1}\cdots t_r^{\al_r}(1+t a)\in\mult R_t\cap R.\] But 

Suppose first that $a\in R$. Apply $3$-fold stability to $\pid{a,-1},\pid{1,t},\pid{0,1}$ to obtain $v\in\mult R$ for which $v-a$ and $1+vt$ are also in $\mult R$. Put $w=\frac{a-v}{1+tv}\in\mult R$. Then $(1+vt)(1+wt)=1+at$, which implies that $1+wt$ is in $\mult R_{t_1\cdots t_n}$ and completes the proof in this case.

	It remains to treat the case that $a\notin R$. In that case, .  Let $\frak q$ be a height one prime of $R$. If $\frak q$ contains none of $t_1,\dots,t_r$, then we have just shown that $x\in\mult R_\frak q$. On the other hand, if $\frak q$ is generated by $t_i$ for some $i=1,\dots,r$, then $tb\in\mult R_\frak q$ and so $x\in\mult R_\frak q$ (recall that $\al_i\ge 1$). In conclusion, \[x\in\bigcap_{\frak p}\mult R_\frak p=\mult R\]
	
This gives a contradiction if $R$ is local; not sure about in general. Note: In the semi-local case it does not appear to be possible to get a contradiction, so must restrict to the local case.

\end{proof}

\begin{lemma}
Let $R$ be a domain, $t\in R$ a generator of a height one primes; assume that $R,t$  (A1). Then \[(1+\pi R)^\times=(1+\pi R_t)^\times.\] (The left group is elements of $1+\pi R$ which are units in $R$; the right group is elements of $1+\pi R_t$ which are units in $R_t$.)

Just need $\pi\notin tR$.
\end{lemma}
\begin{proof}
Suppose that $f\in R_t$ is such that $1+\pi f$ is a unit in $R_t$; put $v=(1+\pi f)^{-1}\in R_t$, so that $v=1-\pi vf$. Let $\nu_t$ denote the discrete valuation on $\Frac R$ associated to the prime ideal $tR$.

{\bf case:} $\nu_t(f)> 0$. Then $f\in R_t\cap R_{t R}=R$. But also, $\nu_t(1+\pi f)=0$ whence $1+\pi f\in\mult R_t\cap\mult R_{tR}=\mult R$.

{\bf case:} $\nu_t(f)=0$. As in the previous case, $f\in R_t\cap R_{tR}=R$. Moreover, if it happens that $\nu_t(1+\pi f)=0$ then we may argue as in the previous case to deduce that $1+\pi f\in\mult R$. Otherwise $\nu_t(1+\pi f)>0$, implying that

{\bf case:} $\nu_t(f)<0$. Write $f=t^{-m}g$ where $m=|\nu(f)|$ and $g\in \mult R_{tR}$. Then \[t^m+\pi g=t^m(1+\pi f)\in\mult R_t,\] and also $t^m+\pi g\in\mult R_t$, whence $t^m+\pi g\in\mult R$. Need contradiction from this!
\end{proof}
}

\comment{
\part{Old material, maybe for Laurent series paper}

\subsection{Steinberg symbols}
For $a,b\in\mult R$, the corresponding {\em Steinberg symbol} is \[\{a,b\}_*:=[h_{12}(a),h_{13}(b)]\in K_2(R).\] The symbols satisfy the following relations in $K_2(R)$:
\begin{enumerate}
\item[(S1)] Bilinearity: $\{a,bc\}_*=\{a,b\}_*+\{a,c\}_*$ and $\{ac,b\}_*=\{a,b\}_*+\{a,c\}_*$ for $a,b,c\in\mult R$.
\item[(S2)] Skew-symmetry: $\{a,-a\}_*=0$ for $a\in\mult R$.
\item[(S3)] Steinberg relations: $\{a,1-a\}_*=0$ for $a\in\mult R$ such that $1-a\in\mult R$.
\end{enumerate}
Let $S(R)$ be the abelian group generated by symbols $\{a,b\}$, for $a,b\in\mult R$, subject to relations (S1)--(S3) (where each $\{,\}_*$ is of course replaced by $\{,\}$). Thus there is a natural homomorphism of abelian groups $S(R)\to K_2(R),\;\{a,b\}\mapsto\{a,b\}_*$ whose image is the subgroup of $K_2(R)$ generated by all the Steinberg symbols.

We will continue to adopt this convention of using $_*$ to denote symbols in $K_2(R)$ and unadorned brackets to denote formal symbols.

\subsection{Dennis-Stein symbols} Given $a,b\in R$ such that $1+ab\in\mult R$, Dennis and Stein defined \[\pid{a,b}_*=x_{21}\left(\frac{-b}{1+ab}\right)x_{12}(a)x_{21}(b) x_{12}\left(\frac{-a}{1+ab}\right)h_{12}(1+ab)\in K_2(R).\] These symbols satisfy the following:
\begin{enumerate}
\item[(D1)] $\pid{a,b}_*=-\pid{-b,-a}_*$ for $a,b\in R$ such that $1+ab\in\mult R$.
\item[(D2)] $\pid{a,b}_*+\pid{a,c}_*=\pid{a,b+c+bca}_*$ for $a,b,c\in R$ such that $1+ab,1+ac\in\mult R$.
\item[(D3)] $\pid{a,bc}_*=\pid{ab,c}_*+\pid{ac,b}_*$ for $a,b,c\in R$ such that $1+abc\in\mult R.$
\end{enumerate}
Let $D(R)$ be the abelian group generated by symbols $\pid{a,b}$, for $a,b\in R$ such that $1+ab\in R$, subject to the analogues of relations (D1)--(D3). So there is a natural homomorphism $D(R)\to K_2(R),\;\pid{a,b}\mapsto\pid{a,b}_*$.

Any Steinberg symbol $\{a,b\}_*$, where $a,b\in\mult R$, is equal to a Dennis-Stein symbol in $K_2(R)$: \[\{a,b\}_*=\pid{(a-1)b^{-1},b}_*.\] Moreover, we have the following:

\begin{lemma}
There is a homomorphism of groups \[S(R)\to D(R),\quad \{a,b\}\mapsto \pid{(a-1)b^{-1},b}.\]
\end{lemma}
\begin{proof}
We must check that the given map respects relations (S1)--(S3). We will first prove that \[\pid{a,b}=\pid{-b,a(1+ab)^{-1}}\qquad(a,b\in R\mbox{ s.t.~}1+ab\in\mult R)\] in $D(R)$. Well,
\begin{align*}
\pid{-b,a(1+ab)^{-1}}-\pid{a,b}
	&=\pid{-b,a(1+ab)^{-1}}+\pid{-b,-a}\tag{by (D1)}\\
	&=\pid{-b,a(1+ab)^{-1}-a+a^2b(1+ab)^{-1}}\tag{by (D2)}\\
	&=\pid{-b,0},
\end{align*}
and it follows from (D2) that $\pid{-b,0}+\pid{-b,0}=\pid{-b,0}$, whence $\pid{-b,0}=0$.

Now we show that the map respects (S2), i.e. that $\pid{-(a-1)a^{-1},-a}=0$ for $a\in\mult R$. Well, etc.etc. From K2 of rings with many units pg19.
\end{proof}

\subsection{Keune's symbols} 
For $a,b,c\in R$ such that $1-a-c+abc=0$, Keune defined a symbol \[\pid{a,b,c}_*=x_{12}(-a)x_{21}(b)x_{21}(1-ab)x_{12}(bc-1)x_{12}(1)\in K_2(R).\] Keune's symbols satisfy the following relations:
\begin{enumerate}
\item[(C1)] $\pid{a,b,c}_*=\pid{b,c,1-ab}_*$ for $a,b,c\in R$ such that $1-a-c+abc=0$.
\item[(C2)] $\pid{a,b,c}_*=-\pid{c,b,a}_*$ for $a,b,c\in R$ such that $1-a-c+abc=0$.
\item[(C3)] $\pid{a,bc,d}_*+\pid{b,ac,e}_*=\pid{ab,c,d+ae}_*$ for $a,b,c,d,e\in R$ such that $1-a-d+abcd=1-b-e+abcd=0$.
\end{enumerate}
Let $C(R)$ be the abelian group generated by symbols $\pid{a,b,c}$, for $a,b,c\in R$ such that $1-a-c+abc=0$, modulo the formal relations analogous to (C1)--(C2).

Any Dennis-Stein symbol $\pid{a,b}_*$, where $a,b\in R$ are such that $1+ab\in\mult R$, can be written as a Keune symbol: \[\pid{a,b}_*=\pid{-a,b,(1+a)(1+ab)^{-1}}_*.\] (Keune actually redefines $\pid{a,b}$ to be $\pid{-a,b}$ and therefore obtains $\pid{a,b}_*=\pid{a,b,(1-a)(1-ab)^{-1}}_*$, but we have preferred to follow Dennis and Stein's original convention.) Moreover,

\begin{lemma}
There is a well-defined homomorphism \[D(S)\to C(S),\quad \pid{a,b}\mapsto\pid{-a,b,(a+1)(1+ab)^{-1}}\]
\end{lemma}
\begin{proof}

\end{proof}

\begin{remark}
If $R$ is 1-fold stable then $K_2(R)$ is generated by Keune symbols; see the end of his paper. Isom by his other paper.
\end{remark}

\subsection{Extended Steinberg symbols}
For $a,b\in\mult{R}$, we have \[\{a,b\}_*=\pid{(a-1)b^{-1},b}_*=\pid{\frac{1-a}{b},b,1-\frac{1-a}{b}\frac{1-b}{a}}_*\stackrel{(C1)}{=}\pid{b,1-\frac{1-a}{b}\frac{1-b}{a},a}_*\] in $K_2(R)$. But the Keune symbol on the right hand side makes sense whenever $a,b\in R$ are non-zero-divisors which satisify $a|b-1$ and $b|a-1$. Keune thus extended Steinberg's symbol by setting \[\{a,b\}:=\pid{b,1-\frac{1-a}{b}\frac{1-b}{a},a}_*\] whenever $a,b\in R$ are non-zero divisors satisfying $a|b-1$ and $b|a-1$. Keune's extended Steinberg symbols are easily seen, using (C1)--(C3), to satisfy the following analogues of (S1)--(S3):
\begin{enumerate}
\item[(S'1)] $\{a,bc\}_*=\{a,b\}_*+\{a,c\}_*$ and $\{bc,a\}_*=\{b,a\}_*+\{c,a\}_*$ for all non-zero-divisors $a,b,c\in R$ such that $a|1-b$, $a|1-c$, and $bc|1-a$.
\item[(S'2)] $\{a,-a\}=0$ for all $a\in\mult R$ (n.b.~$a|1-a$ if and only if $a\in\mult R$); also $\{a,b\}=-\{b,a\}$ for all non-zero-divisors $a,b\in R$ such that $a|1-b$ and $b|1-a$.
\item[(S'3)] $\{a,1-a\}=0$ for all non-zero-divisors $a\in R$.
\end{enumerate}
Conversely, if $a,b,c\in R$ are such that $1-a-c+abc=0$, so that the Keune symbol $\pid{a,b,c}_*$ is well-defined, and we assume that $a,c$ are non-zero divisors, then automatically $a|1-c$, $c|1-a$, and $b=1-(1-a)(1-c)a^{-1}c^{-1}$, so that $\pid{a,b,c}_*=\pid{a,c}_*$.

, and lemmas ... and ... imply that \[S(R)\to C(R),\quad \{a,b\}\mapsto\pid{b,1-\frac{1-a}{b}\frac{1-b}{a},a}\] is a well-defined homomorphism.

\subsection{Dennis-Stein map}
For $a\in R$, let $V_a(R)$ denote the set $R$ equipped with the following commutative, associative operation: \[b\oplus c:=b+c+bca\qquad(b,c\in R).\] In other words, $(1+ba)(1+ca)=1+(b\oplus c)a$; in fact, if $a$ is not a zero divisor then $V_a(R)\to 1+Ra,\;b\mapsto 1+ba$ is an isomorphism of monoids, but we want to impose this condition on $a$ as late as possible and so we must work directly with $V_a(R)$. Still, all our calculations with $V_a(R)$ will be inspired by pretending that it equals $1+Ra$.

Let $V_a^\times(R)$ denote the submonoid of $V_a(R)$ consisting of elements $b$ for which $1+ba\in\mult R$. Then $V_a^\times(R)$ is actually a group, with the inverse of $b\in V_a^\times(R)$ given by $-b(1+ab)^{-1}$.

\begin{proposition}
Suppose that $A$ is a local ring. Then, for $a\in\frak m$ and $b\in A$, \[\pid{t,b}=\rho_a(1+ab).\]
\end{proposition}
\begin{proof}
If $b\in\mult A$ this is trivial, else write $b$ as a sum of units and apply multiplicativity of both sides.
\end{proof}

\part{New part}
For $R$ a commutative ring, let $\cal H_R$ denote the exact category of finitely-generated $R$-module of projective dimension $\le 1$. Given $t\in R$, write $\cal H_R[t]$ for the exact subcategory of modules killed by $t$, and $\cal H_R[t^\infty]$ for those killed by a power of $t$. According to Quillen singular localisation theory, the sequence \[\cal H_R[t^\infty]\to\cal H_R\to\cal H_{R_t}\] gives rise to a fibration sequence of $K$-theory spaces (while devissage implies that $K_*(\cal H_R)=K_*(R)$ and similarly for $R_t$).

We write $\cal P_R$ and $\cal M_R$ for the categories of finitely-generated, projective and all modules respectively.

We have an inclusion functor $\cal P_{R/\pid t}\to\cal H_R[t]$ whose image is those modules whose projective dimension over $R/\pid t$ is $\le 1$ (or equivalently, is finite in fact). We claim that the composition \[\cal P_R\to\cal P_{R/\pid t}\to\cal H_R\] induces zero on $K$-theory. This follows from the standard argument. So, if we ever know that $K_n(R)\to K_n(R/\pid t)$ is surjective, then we can deduce that $K_n(\cal H_R[t])\to K_n(R)$ vanishes on the image of $K_n(\cal P_{R/\pid t})$.

\subsection{$R=A[[t]]$}
Let's suppose that $R=A[[t]]$.

Step 1: Let $M$ be in $\cal H_{A[[t]]}[t]$; then $M$ is finitely generated over $A$ and has flat dimension $\le 1$ over $A$, hence also has proj. dim $\le 1$ (see Weibel). By the comparison theorem for pd when modding out by a non-zero divisor, we deduce that $M$ has pd. 0 over $A$, hence is projective.

More generally, if $M\in \cal H_{A[[t]]}[s]$ where $s=t^N$, then we observe that $A[[t]]$ is projective over $A[[s]]$ and so we repeat the argument above to deduce that $M$ is projective over $A$.

So, if $A$ is local, then we see that any $M\in\cal H_{A[[t]]}[t^\infty]$ is actually a free $A$-module, on which $t$ defines an $A$-linear nilpotent operator.

Step 2: We would like to know that $K_n(\cal H_{A[[t]]}[t^\infty])\to K_n(A[[t]])$ is sometimes $0$, for which it would be enough to know that $\cal H_{A[[t]]}[t^\infty]$ admits devissage to its subcategory $\cal P_A$.

If this descent is true, we would deduce that $\cal P_A\to\cal P_{A[[t]]}\to\cal P_{A((t))}$ gives rise to a fibration sequence in $K$-theory: I didn't realise that I would get something so strong. We'ed get a long exact sequence in $K$-theory: \[\dots\to K_n(A)\to K_n(A[[t]])\to K_n(A((t)))\to K_{n-1}(A)\to\dots\] In particular, we'ed get, for $A$ local, \[0\to\mult{A[[t]]}\to K_1(A((t)))\onto K_0(A)=\bb Z\] (to see the injectivity and surjectivity, just use the determinant from $K_1(A((t)))$). This implies that $K_1(A((t))=\mult{A((t))}$, i.e. that $SL_\infty$ and $E_\infty$ are the same.

If $A$ is one-dimensional then we can apply Bass' stable range theorem to deduce that \[GL_2(A((t)))\onto GL_3(A((t)))/E_3(A((t)))\isoto K_1(A((t)))\]
}

\bibliographystyle{acm}
\bibliography{../Bibliography}

\begin{thebibliography}{10}

\bibitem{Dennis1973}
{\sc Dennis, R.~K., and Stein, M.~R.}
\newblock The functor {$K\sb{2}$}: a survey of computations and problems.
\newblock In {\em Algebraic {$K$}-theory, {II}: ``{C}lassical'' algebraic
  {$K$}-theory and connections with arithmetic ({P}roc. {C}onf., {S}eattle
  {R}es. {C}enter, {B}attelle {M}emorial {I}nst., 1972)}. Springer, Berlin,
  1973, pp.~243--280. Lecture Notes in Math., Vol. 342.

\bibitem{Dennis1974}
{\sc Dennis, R.~K., and Stein, M.~R.}
\newblock Injective stability for {$K\sb{2}$} of local rings.
\newblock {\em Bull. Amer. Math. Soc. 80\/} (1974), 1010--1013.

\bibitem{Dennis1975}
{\sc Dennis, R.~K., and Stein, M.~R.}
\newblock {$K\sb{2}$} of discrete valuation rings.
\newblock {\em Advances in Math. 18}, 2 (1975), 182--238.

\bibitem{Kerz2009a}
{\sc Kerz, M.}
\newblock The {G}ersten conjecture for {M}ilnor {$K$}-theory.
\newblock {\em Invent. Math. 175}, 1 (2009), 1--33.

\bibitem{Milnor1970}
{\sc Milnor, J.}
\newblock Algebraic {$K$}-theory and quadratic forms.
\newblock {\em Invent. Math. 9\/} (1970), 318--344.

\bibitem{Stein1973}
{\sc Stein, M.~R.}
\newblock Surjective stability in dimension {$0$} for {$K\sb{2}$} and related
  functors.
\newblock {\em Trans. Amer. Math. Soc. 178\/} (1973), 165--191.

\bibitem{Dennis1973a}
{\sc Stein, M.~R., and Dennis, R.~K.}
\newblock {$K_{2}$} of radical ideals and semi-local rings revisited.
\newblock In {\em Algebraic {$K$}-theory, {II}: ``{C}lassical'' algebraic
  {$K$}-theory and connections with arithmetic ({P}roc. {C}onf., {B}attelle
  {M}emorial {I}nst., {S}eattle, {W}ash., 1972)}. Springer, Berlin, 1973,
  pp.~281--303. Lecture Notes in Math. Vol. 342.

\bibitem{Suslin1991}
{\sc Suslin, A.~A., and Yarosh, V.~A.}
\newblock Milnor's {$K_3$} of a discrete valuation ring.
\newblock In {\em Algebraic {$K$}-theory}, vol.~4 of {\em Adv. Soviet Math.}
  Amer. Math. Soc., Providence, RI, 1991, pp.~155--170.

\bibitem{vanderKallen1971}
{\sc van~der Kallen, W.}
\newblock Le {$K_{2}$} des nombres duaux.
\newblock {\em C. R. Acad. Sci. Paris S\'er. A-B 273\/} (1971), A1204--A1207.

\bibitem{vanderKallen1976}
{\sc van~der Kallen, W.}
\newblock The {$K_{2}$}'s of a {$2$}-dimensional regular local ring and its
  quotient field.
\newblock {\em Comm. Algebra 4}, 7 (1976), 677--679.

\bibitem{vanderKallen1977}
{\sc van~der Kallen, W.}
\newblock The {$K\sb{2}$} of rings with many units.
\newblock {\em Ann. Sci. \'Ecole Norm. Sup. (4) 10}, 4 (1977), 473--515.

\bibitem{vanderKallen1975}
{\sc van~der Kallen, W., Maazen, H., and Stienstra, J.}
\newblock A presentation for some {$K\sb{2}(n,R)$}.
\newblock {\em Bull. Amer. Math. Soc. 81}, 5 (1975), 934--936.

\bibitem{Weibel1980}
{\sc Weibel, C.~A.}
\newblock {$K$}-theory and analytic isomorphisms.
\newblock {\em Invent. Math. 61}, 2 (1980), 177--197.

\end{thebibliography}

\noindent Matthew Morrow,\\
University of Chicago,\\
5734 S. University Ave.,\\
Chicago,\\
IL, 60637,\\
USA\\
{\tt mmorrow@math.uchicago.edu}\\
\url{http://math.uchicago.edu/~mmorrow/}\\
\end{document}